 \newcounter{idesc} 
\DeclareSymbolFont{AMSb}{U}{msb}{m}{n}
 \def\P2m{\P^2(M)}
\begin{document}


\def\N{\mathbb N}
\def\M{\mathbb M}
\def\fdp{f\circ d_{p}}
\def\R{\mathbb R}
\def\DE{D(\mathcal E)}
\def\DEC{\mbox{D}_c(\mathcal E)}
\def\mint{\,\mathop{\makebox[0pt][c]{$\int$}\makebox[0pt][c]{--}\,}}
\def\mintl{\!\!\mathop{\makebox[0pt][c]{$\int$}\makebox[0pt][c]{--}\!}\limits}
\def\suml{\sum\limits}
\def\Er{\mathcal E^r}
\def\E{\mathcal E}
\def\Erw{\mathbb E}
\def\intl{\int\limits}
\newcommand{\Br}[1]{B_r(#1)}
\newcommand{\Bbr}[1]{\mathbb B_r(#1)}
\newcommand{\bnk}[1]{b_{n,k}(#1)}
\newcommand{\bn}[1]{b_{n}(#1)}
\def\phx{\phi_x}
\def\Mkx{\M_k(x)}
\def\bm{{\mathop{\mbox{\normalfont \,vol}}}}
\def\dbm{{{\mbox{\normalfont \tiny d \!\!vol}}}}
\def\dbmn{{{\mbox{\normalfont d \!\!vol}}}}
\def\Qrx{q_r(x)}
\def\Regset{X\setminus S_X}
\def\volsing{\mathbb S_X}
\def\angle{\sphericalangle}
\def\Eomega{\E_\Omega}
\def\Deomega{\mathcal D(\Eomega)}
\def\Meas{\cal M}
\def\1{\,{\makebox[0pt][c]{\normalfont    1}
\makebox[2.5pt][c]{\raisebox{3.5pt}{\tiny {$\|$}}}
\makebox[-2.5pt][c]{\raisebox{1.7pt}{\tiny {$\|$}}}
\makebox[2.5pt][c]{} }}
\def\one{\1 }
\def\eI{[0,1]}

\def\one{\1 }
\def\B{\mathcal B}
\newcommand{\infl}[1]{\inf\limits_{#1}}
\newcommand{\gsk}[1]{\left\{ #1 \right\}}
\newcommand{\norm}[1]{\left\| #1 \right\|}
\def\P{{ /\!\!/}}
\def\ul{\underline}
\def\H{\mathcal H}
\def\for{\mbox{ for }}
\def\Ricc{\mathop{\mbox{\normalfont Ricc}}}
\def\Sec{\mathop{\mbox{Sec}}}

\def\sth{\,|\,}
\def\Hess{\mathop{\mbox{{\normalfont Hess}}}}
\def\einh{\frac{1}{2}}
\def\ol{\overline}
\def\abs{~\\~\\}
\def\sabs{~ \smallskip ~}
\def\k{,\,}
\def\supp{\mathop{\mbox{supp}}}
\def\Curv{\mbox{Curv}}

\def\cut{\mbox{\normalfont Cut}}
\def\mt{\rightarrow}
\newcommand{\lgauss}[1]{{\lfloor #1 \rfloor}}
\def\falls{\mbox{ if }}
\def\sonst{\mbox{ else}}
\def\grad{\mbox{grad}}
\def\div{{{\mathop{\,{\rm div}}}}}
\def\diam{{{\mathop{\,{\rm diam}}}}}
\def\Id{{\mathop{{\bf 1}_{\small \R^d}}}}
\def\k{,\,}
\def\dist{{\mbox{dist}}}
\def\Ind{{\mathop{\mbox{I}}}}
\def\dil{{\mathop{\mbox{dil}}}}
\def\H{\mathbb H}
\def\mtm{M \times M}
\def\BV{\mbox{\normalfont BV}}
\def\gij{{g_{ij}}}
\def\gije{{g_{ij}^\epsilon}}
\def\Lip{\mathop{\mbox{\normalfont Lip}}}
\def\lipc{\mathop{\mbox{\textit {lip}}}}
\def\Lipc{\mathop{\mbox{\textit {Lip}}}}
 \def\loc{{\mbox{\scriptsize loc}}}
\renewcommand{\bullet}{{\mathbf \cdot  }}
\def\Cut{\mbox{\normalfont Cut}}
\def\dist{\mbox{\normalfont dist}}
\def\sign{\mbox{\normalfont sign}}
\def\pr{\mbox{\normalfont pr}}

\def\ioe{\frac{1}{\epsilon}}
\def\fe{f_{\epsilon}}
\def\Ebr{\E^{b,r}}
\def\Dco{D_c(\E_\Omega)}

\newcommand{\D}{{\mathbb{D}}}
\newcommand{\T}{{\mathbb{T}}}
\newcommand{\DD}{\overline{\mathbb{D}}}
\newcommand{\LL}{{\mathbb{L}}}
\newcommand{\EE}{{\mathbb{E}}}

\newenvironment{bew}{\begin{proof}}
{\end{proof}}


%
%
\newtheorem{thm}{Theorem}
\newtheorem{rthm}{Theorem}
\newtheorem{satz}{Proposition}
\newtheorem{lem}[]{Lemma}
\newtheorem{cor}[]{Corollary}
\newtheorem{defn}[]{Definition}

\theoremstyle{definition}
\newtheorem{bem}[]{Remark}
\newtheorem{bemn}[bem]{Remarks}
\newtheorem{bsp}[]{Example}
\newtheorem{bspe}{Examples}

\newcommand\iitem{\stepcounter{idesc}\item[(\roman{idesc})]}
\newenvironment{idesc}{\setcounter{idesc}{0}
\begin{description}} {\end{description}}

\def\dnvol{{d \ol{\normalfont v}}}

\def\P{{\mathcal P}} 
\def\Z{{\mathcal Z}} 
\def\CMS{CM[0,1]^2}
\def\eI{{[0,1]}}
\def\L2{{L^2}}
\def\L11{\mathcal P _{ac}}
\def\S1{{S^1}}
\def\llangle{\langle \langle}
\def\rrangle{\rangle \rangle}

\newcounter{zaehler}
\setcounter{zaehler}{1}
\newcommand{\inszahl}{{\normalfont\tiny\thezaehler.}\addtocounter{zaehler}{1}}

\def\Pbeta{\mathbb P^\Beta}
\def\G{\mathcal G}
\def\Qbeta{{\mathbb Q^\beta}}
\def\Qnull{{\mathbb Q^0}}
\def\dreh{\mathcal R}
\def\Pbeta{\mathbb P^\beta}
\def\ssurd{{\hfill $\surd$}}
\def\bbox{{\hfill $\Box$}}
\def\Def{{\bf  \inszahl Definition.~}}
\def\Satz{{\bf \inszahl Satz. ~}}
\def\Lem{{\bf \inszahl Lemma.~}}
\def\Bew{{\it Beweis. ~}}
\def\Kor{{\bf \inszahl Korollar. ~}}
\def\Top{~\\{\bf \inszahl\,}}
\def\id{{\mbox{id}}}
\def\L{{\mathcal L}}
\def\Lbeta{{\mathcal L^\beta}}

\def\intfgamma{{\mbox{$\int\!F(\gamma)$}}}
\def\intfg{{\mbox{$\int\!F(g)$}}}
\def\intggamma{{\mbox{$\int\!G(\gamma)$}}}
\def\inthg{{\mbox{$\int\!H(g)$}}}
\def\nN{\mathbb N}
\def\H{{\mathcal H}}
\def\C{{\mathcal C}}
\def\D{{\mathcal D}}
\def\smint{{\mbox{$\int$}}}
\def\sabs{ \\ \smallskip}
\def\inthg{{\mbox{$\int\!H(g)$}}}
\def\rinv{{\rm rinv}}
\def\remarkend{}
\def\dnull{{d_\E}}

\newcommand{\leb}{{\mbox{Leb}}}
\newcommand{\Cyl}{{\mathfrak{C}}}
\newcommand{\Syl}{{\mathfrak{S}}}
\newcommand{\Zyl}{{\mathfrak{Z}}}
\def\smint{{\mbox{$\int$}}}

\newcommand{\MCP}{\mbox{{\sf{MCP}}}}
\newcommand{\CD}{\mbox{{\sf{CD}}}}
\newcommand{\Length}{\mbox{\rm Length}}
\newcommand{\Ent}{\mbox{\rm Ent}}
\newcommand{\Gaps}{\mbox{\rm gaps}}

\newcommand{\Dom}{{\it{Dom}}}
\newcommand{\tr}{\mbox{\rm tr}}
\newcommand{\X}{{\mathbb{X}}}
\newcommand{\A}{{\mathcal{A}}}
\newcommand{\U}{{\mathcal{U}}}
\newcommand{\V}{{\mathcal{V}}}
\newcommand{\inv}{\mbox{\sf inv}}
\newcommand{\cum}{\mbox{\sf cum}}
\newcommand{\varh}{{h}}
\newcommand{\Pz}{{\mathcal{P}_2}}
\newcommand{\Pe}{{\mathcal{P}}}
\newcommand{\riccurv}{\,\underline{\mathbb{C}\mbox{\rm urv}}}
\newcommand{\alexcurv}{\,\underline{\mbox{{\sf curv}}}}

\newtheorem{theorem}{Theorem}[section]
\newtheorem{lemma}[theorem]{Lemma}
\newtheorem{definition}[theorem]{Definition}
\newtheorem{corollary}[theorem]{Corollary}
\newtheorem{proposition}[theorem]{Proposition}
\newtheorem{remark}[theorem]{Remark}
\newtheorem{example}[theorem]{Example}

\newcommand{\Q}{{\mathbb{Q}}}
\newcommand{\Pp}{{\mathbb{P}}}
 
\renewcommand{\labelenumi}{(\roman{enumi})}

\def\proof{\smallskip \noindent{\textit{Proof.~}}}

\title{{\Large  Particle Approximation of the Wasserstein Diffusion\\
}}
\author{ Sebastian Andres and Max-K. von Renesse
\def\thefootnote{} \footnote{Technische Universit\"at Berlin, email: [andres,mrenesse]\@@math.tu-berlin.de}
}

 \maketitle \abstract{We construct a system of interacting two-sided Bessel processes on the unit interval and show that the  associated empirical measure process converges to the Wasserstein Diffusion \cite{vrs07}, assuming that Markov uniqueness holds for the generating Wasserstein Dirichlet form. 
The proof is based on the variational convergence of an associated sequence of Dirichlet forms in the generalized Mosco sense of Kuwae and Shioya \cite{MR2015170}.
} ~\\



\section{Introduction} 
As shown in \cite{vrs07},   for  $\beta >0$ there exists a measure $\mathbb P^\beta$ and a Hunt process $\bigl(P_{\eta \in \P(\eI)},(\mu_t)_{t\geq 0}\bigr)$ 
on $(\P(\eI), \tau_w)$, the space  of Borel probabilities over $\eI$ equipped with the weak topology, such that 
\begin{itemize}
\item[i)]
$\mathbb P^\beta$ admits the formal representation  $\mathbb P^\beta (d\mu) = \frac 1 Z e^{-\beta \Ent(\mu) } \mathbb P^0(d\mu)$ as a Gibbs-type measure on $\P(\eI)$ with the Boltzmann entropy  $\Ent(\mu)= \int_\eI  \log ( d\mu/dx) d\mu$ as Hamiltonian and  
\item[ii)] $\bigl(P_{\eta \in \P(\eI)},(\mu_t)_{t\geq 0}\bigr)$  is a $\mathbb P^\beta $-symmetric diffusion on $(\P(\eI),\tau_w)$ with intrinsic distance given by the quadratic Wasserstein distance $d_2^W$. 
 \end{itemize}  

Moreover, letting denote by $(\mu_\cdot)$ the process obtained from the invariant starting distribution $\mathbb P^\beta$ we arrive at a solution of the following martingale problem. The  initial law of  $(\mu_t)_{t\geq 0}$ satisfies   
\begin{equation}
\langle f,\mu_0  \rangle \sim \int_0^1 f(D_t^\beta ) dt \quad \forall f\in C(\eI),
\label{initcond}
\end{equation}
where $(D_t^\beta)_{t\in \eI}$ is the real valued Dirichlet (or normalized Gamma) process over $\eI$ with parameter $\beta>0$, and for $f\in C^2(\eI)$ with $f'(0)=f'(1)=0$ the process
\begin{equation}\label{martpart}
\begin{split}
 M_t&=\langle f,\mu_t\rangle-\beta\cdot\int_0^t
\langle f'',\mu_s\rangle ds\\
&-\int_0^t\left(
\sum_{I\in\Gaps(\mu_s)}\left[\frac{f''(I_-)+f''(I_+)}2-\frac{f '(I_+)-f'(I_-)}{|I|}\right] -\frac{f''(0)+f''(1)}2\right)ds,
\end{split}
 \end{equation}
 is a continuous martingale 
with quadratic variation process
\begin{equation}
 [ M]_t=2\int_0^t\langle(f')^2,\mu_s\rangle ds. \label{qvar}
\end{equation}
Here  $\Gaps(\mu)$ denotes the  set of connected components in the complement of spt($\mu$). \pagebreak

Properties i) and ii)  suggest to view  $(\mu_t)_{t\geq 0}$ as model for a diffusing fluid when its heat flow is perturbed by a kinetically uniform random forcing. The actual construction of $(\mu_t)_{t\geq 0}$ in \cite{vrs07}  uses  abstract Dirichlet form methods \nocite{MR1303354} without direct reference to physical intuition. $\bigl(P_{\eta \in \P(\eI)},(\mu_t)_{t\geq 0}\bigr)$ is generated from the $L^2(\P(\eI), \mathbb P^\beta)$-closure $\E$ of the quadratic form  
\[  Q(F,F) = \int_{\P(\eI)} \norm{\nabla^w F}^2_\mu \mathbb P^\beta (d\mu), \quad F\in \mathcal Z
\]
 on the class $\mathcal Z =\{  F: \P(\eI) \to \R\,|\,F(\mu) = f(\langle \phi_1, \mu\rangle, \langle \phi_2, \mu\rangle,\dots, \langle \phi_k, \mu\rangle), f\in C^\infty_c(\R^k), \{\phi_i\}_{i=1}^k \subset  C_c^\infty (\R), k \in \N\}$, where   $\norm{\nabla^w F} _\mu = \norm{  (D_{|\mu} F)'(\cdot)}_{L^2(\eI, \mu)}$ and $(D_{|\mu} F)(x) =\partial_{t|t=0} F(\mu+t\delta_x)$. \smallskip 

In this paper we aim at an approximation of $(\mu_\cdot)$ by  a sequence of interacting particle systems in order to gain insight into some of its qualitative features.\smallskip

In analytic terms the Wasserstein diffusion $(\mu_\cdot)$ solves an SPDE with nonlinear (singular) drift and non-Lipschitz multiplicative noise. \nocite{MR1707314} It should be noted that the class of stochastic nonlinear evolution equations   admitting a rigorous particle approximation appears to be rather small. 
Some examples of lattice systems with stochastic nonlinear  hydrodynamic behaviour are reviewed in  \cite{MR1661764}, the case of exchangeable diffusions is studied e.g. in  
 \cite{MR964251,MR1337250} and  \cite{MR1404523,MR2164042}
deal with stochastic nonlinear scaling limits of  population models with interactive behaviour. 
\smallskip

Given  the singularity of the generator of $(\mu_\cdot)$, here  we choose an approximation by a sequence of  \textit{reversible} particle systems. This allows to use Dirichlet form methods for the passage to the limit instead of arguing along a sequence of martingale problems. For the identification of the limit we have to assume that $\E$ is a maximal element in the class of (not necessarily regular) Dirichlet forms on $L^2(\P(\eI), \mathbb P^\beta)$, i.e. that \textit{Markov uniqueness} holds for $\E$.  

\smallskip

The assumption on Markov uniqueness appears in several quite similar contexts as well \cite{MR2186217,MR2278454}. The verification is usually difficult, in particular in a non-Gaussian infinite dimensional setting involving singular logarithmic derivatives \cite{MR1734956}. Finally, by general principles the Markov uniqueness of $\E$ is weaker than the essential self-adjointness of the generator of $(\mu_t)_{t\geq 0}$ on $\mathcal Z$ and stronger than the well-posedness, i.e. uniqueness, of the martingale problem problem defined by (\ref{initcond}), (\ref{martpart}) and  (\ref{qvar}) in the class of Hunt processes on $\P(\eI)$, cf. \cite[theorem 3.4]{MR1335494}. 

\nocite{MR1658889} 

\section{Set Up and Main Result} 
For  $N\in \N$ let $X^N_t=(x^1_t, \cdots, x^{N-1}_t) \in  \Sigma_N := \{x \in \R^{N-1}, 0 \leq  x^1\leq  x^2\leq \cdots\leq  x^{N-1} \leq 1 \} \subset \R^{N-1}$ denote the ordered vector of the positions of  $N-1$ particles in $\eI$. 
Define the probability measure $q_N$ on $\Sigma_N$ by
\[ q_N (dx^1, \cdots, dx^{N-1}) = \frac {\Gamma(\beta)}{(\Gamma(\beta/N))^N} \prod_{i=1}^{N} (x^i-x^{i-1})^{\frac \beta N -1} dx^1\dots dx^{N-1},
\]
where $x_0=0$ and $x_N =1$ by convention. The $L^2(\Sigma_N, q_N)$-closure of 
\begin{equation*}
 \E^N(f,f) = \int_{\Sigma_N} | \nabla f|^2(x) q_N (dx) , \quad f \in C^\infty (\Sigma_N)
\label{findirichf}
\end{equation*}
defines a local regular Dirichlet form, which is again denoted by $\E^N$. Let $(X^N_t)_{t\geq 0}$ be the associated Markov process on $\Sigma_N$, starting from the invariant distribution $q_N$ and let
\[  \mu^N_t = \frac 1 {N-1} \sum_{i=1}^{N-1} \delta_{  x^{i}_{N \cdot t}} \in \P(\eI),\]
 be the associated empirical measure process on $\eI$, considered on time scale $N\cdot t$. Then we prove the following assertion.   
\begin{theorem} \label{mainthm} \textit{Assume Markov-uniqueness holds for $\E$, then  $(\mu^N_.)\stackrel{N \to \infty} {\Longrightarrow } (\mu.) $ in $C_{\R_+} \bigl((\P(\eI),\tau_w\bigr))$.}
\end{theorem}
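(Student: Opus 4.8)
## Proof Proposal

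The plan is to prove the convergence via the theory of generalized Mosco convergence of Dirichlet forms due to Kuwae and Shioya, which yields convergence of the associated semigroups and resolvents, and then upgrade this to weak convergence of the stationary processes in path space. The setup requires identifying a common Hilbert space structure: since the forms $\E^N$ live on $L^2(\Sigma_N, q_N)$ while $\E$ lives on $L^2(\P(\eI), \mathbb P^\beta)$, I would use the embedding $\iota_N : \Sigma_N \to \P(\eI)$, $x \mapsto \frac{1}{N-1}\sum_{i=1}^{N-1}\delta_{x^i}$, and push forward to realize the $L^2(\Sigma_N, q_N)$ spaces as (asymptotically dense) subspaces converging to $L^2(\P(\eI), \mathbb P^\beta)$ in the Kuwae--Shioya sense. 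A key preliminary is therefore that $(\iota_N)_* q_N \Rightarrow \mathbb P^\beta$ weakly on $\P(\eI)$; this should follow from the fact that the empirical measure of the spacings of $N-1$ points drawn from $q_N$ converges to the Dirichlet process with parameter $\beta$, matching the description of $\mathbb P^\beta$ via (\ref{initcond}), together with the explicit Dirichlet-type density of $q_N$.

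The two analytic ingredients of generalized Mosco convergence are: (a) a \emph{liminf estimate} --- for any sequence $F_N \in L^2(\Sigma_N,q_N)$ converging to $F \in L^2(\P(\eI),\mathbb P^\beta)$, one has $\liminf_N \E^N(F_N,F_N) \geq \E(F,F)$; and (b) a \emph{recovery sequence} --- for every $F$ in (a core of) the domain of $\E$ there exist $F_N$ converging to $F$ with $\limsup_N \E^N(F_N,F_N) \leq \E(F,F)$. For (b) the natural candidate is to restrict cylinder functions $F(\mu) = f(\langle\phi_1,\mu\rangle,\dots,\langle\phi_k,\mu\rangle)$ on $\P(\eI)$ to $\Sigma_N$ via $\iota_N$; a direct computation of $|\nabla (F\circ\iota_N)|^2$ on $\Sigma_N$ should show it approximates $\|\nabla^w F\|_\mu^2$ pointwise (up to the $1/(N-1)$ scaling built into $\mu^N_t = \frac{1}{N-1}\sum\delta_{x^i_{N\cdot t}}$ and the time change $t \mapsto Nt$, which together fix the diffusion coefficient), and then one passes to the limit under the integral using the weak convergence $(\iota_N)_* q_N \Rightarrow \mathbb P^\beta$ and uniform integrability from the smoothness of $f,\phi_i$. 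The liminf estimate (a) is more delicate because it must hold against \emph{all} $L^2$-convergent sequences; here one uses lower semicontinuity, a mollification/truncation argument, and crucially the \emph{Markov uniqueness assumption on $\E$}, which guarantees that the limiting form obtained from the Mosco procedure --- a priori only some Dirichlet form dominated by or related to the closure of $Q$ on $\mathcal Z$ --- must coincide with $\E$ itself rather than a proper extension or a different closure.

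From generalized Mosco convergence one obtains strong convergence of resolvents $G^N_\lambda \to G_\lambda$ and of semigroups $T^N_t \to T_t$ in the Kuwae--Shioya sense, and since the forms are symmetric and the reference measures converge weakly, this gives convergence of the finite-dimensional distributions of the stationary processes: for bounded continuous $G_0,\dots,G_m$ on $\P(\eI)$ and times $t_0 < \dots < t_m$, the expectations $\mathbb E_{q_N}[\prod_j G_j(\mu^N_{t_j})]$ converge to $\mathbb E_{\mathbb P^\beta}[\prod_j G_j(\mu_{t_j})]$. To conclude weak convergence in $C_{\R_+}((\P(\eI),\tau_w))$ it then remains to establish tightness of the laws of $(\mu^N_\cdot)$ on path space; for this I would use an Aldous-type criterion, controlling the modulus of continuity of $t \mapsto \langle\phi,\mu^N_t\rangle$ for a countable determining family $\phi \in C^\infty(\eI)$ by means of the stationary martingale decomposition of $\langle\phi,\mu^N_{N\cdot t}\rangle$ coming from the Dirichlet form $\E^N$ (its quadratic variation being $\frac{1}{(N-1)^2}\cdot N\cdot 2\int_0^t \sum_i |\phi'(x^i_s)|^2 ds$, which stays bounded in $N$ precisely because of the chosen scaling), plus the fact that $(\P(\eI),\tau_w)$ is compact so that tightness reduces to equicontinuity of finitely many real-valued coordinate processes.

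The step I expect to be the main obstacle is the liminf inequality (a) together with the correct identification of the limit form, i.e. the part that genuinely consumes the Markov uniqueness hypothesis: one must rule out that the sequence of discrete energies $\E^N$ converges (in the $\Gamma$-/Mosco sense) to a \emph{strictly smaller} closed form than $\E$, which would correspond to a different self-adjoint extension or to losing mass of the energy in the singular limit where particles collide (the boundary strata of $\Sigma_N$, where the Dirichlet density $\prod(x^i-x^{i-1})^{\beta/N-1}$ degenerates). Handling these collision configurations --- showing the associated energy contributions are asymptotically negligible and that no "hidden" boundary form appears in the limit --- is where the non-Gaussian singular-drift nature of the problem makes the analysis hard, and is exactly the point at which the hypothesis that $\E$ is maximal among Dirichlet forms on $L^2(\P(\eI),\mathbb P^\beta)$ is invoked.
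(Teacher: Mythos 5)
Your overall architecture is the same as the paper's: tightness via a martingale decomposition and Aldous-type criterion (the paper reduces to the real processes $\langle f,\mu^N_\cdot\rangle$ via Dawson's criterion and bounds $N\cdot L^NF^N$ and the quadratic variation exactly as you indicate), plus identification of the limit through Kuwae--Shioya convergence of the forms $N\cdot\E^N$ and hence strong convergence of resolvents/semigroups and of finite-dimensional distributions. However, the step you yourself flag as the main obstacle --- the liminf inequality (Mosco I) and the precise way Markov uniqueness enters --- is left without a workable mechanism: ``lower semicontinuity, a mollification/truncation argument, and crucially the Markov uniqueness assumption'' is not an argument, because the functions $u_N$ live on different spaces and only converge weakly, so no semicontinuity of the energies is available directly. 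The paper's actual mechanism is dual: Markov uniqueness yields a Meyers--Serrin-type characterization $(\E(u,u))^{1/2}=\sup_{\zeta\in\Theta}\langle u,\div_\Qbeta\zeta\rangle_H/\norm{\zeta}_T$, and one then constructs, for each smooth vector field $\zeta$ on $\G$, discrete fields $\zeta^N$ on $\Sigma_N$ with $\norm{\zeta^N}_{T^N}\to\norm{\zeta}_T$ and $\div_{q_N}\zeta^N\to\div_\Qbeta\zeta$ strongly; Mosco I then follows by integrating by parts on both sides and passing to the limit against the weakly convergent $u_N$. Proving that convergence of divergences --- which involves the singular drift terms $V^\beta_\varphi$ summed over the jump set of $g$ --- is the technical heart of the paper and has no counterpart in your proposal.

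A second, structural gap is your coupling of the Hilbert spaces. You propose $\Phi^N u=u\circ\iota_N$ with $\iota_N(x)=\frac1{N-1}\sum_i\delta_{x^i}$ and rely on $(\iota_N)_*q_N\Rightarrow\mathbb P^\beta$; but uniform empirical measures form a $\mathbb P^\beta$-null set, so composition with $\iota_N$ is not defined on $L^2(\P(\eI),\mathbb P^\beta)$-classes, and weak convergence of the reference measures alone does not produce the maps (nor the strong-convergence computations for recovery sequences and divergences) that the Kuwae--Shioya scheme needs. The paper sidesteps this by passing to the quantile parameterization $\G$, where $q_N$ is \emph{exactly} the law of $(g(1/N),\dots,g((N-1)/N))$ under $\Qbeta$, so that $\Phi^N$ can be taken to be the conditional expectation onto the corresponding $\sigma$-algebra; this exact marginal identity (together with the auxiliary modification $\nu^N_t$ and Lemma \ref{equivalence}) is what makes all the norm and divergence computations go through. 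Two smaller omissions: since your recovery sequences are built only for cylinder functions (a core), you need, as the paper proves, that Mosco I plus the core-version Mosco II' still implies strong resolvent convergence; and passing from semigroup convergence to finite-dimensional distributions requires a multiplication lemma (strong convergence is preserved under multiplication by $f\circ\iota^N$ for $f\in C(\G)$), which you assert but do not address.
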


\begin{remark} 
A careful integration by parts for $q_N$ shows that the domain the generator $L_N$ of $\E^N$ contains the set of all smooth Neumann functions on $\Sigma_N$. For such $f$  
\[ L^N f(x)= (\frac \beta N -1 )   \sum_{i=1}^{N-1}\left ( \frac 1 {x^i - x^{i-1}} - \frac 1 {x^{i+1} - x^{i}} \right) \frac {\partial }{\partial x^i} f(x) +  \Delta f(x)\quad \for x\in \mbox{Int}(\Sigma_N).\]   

Hence given initial conditions   $0< x^1_0< x_0^2 <\cdots < x^{N-1}_0< 1$, $(X^N_\cdot)$  is the formal  solution to  the  system of coupled Skorokhod SDEs
\begin{align}
d x^i _t = (\frac \beta N -1 ) \left ( \frac 1 {x^i_t - x^{i-1}_t} - \frac 1 {x^{i+1}_t - x^{i}_t} \right) dt +\sqrt 2 dw^i_t + dl_t^{i-1} -dl_t^i, \quad i=1, \cdots, N-1, \label{bessde1}
\end{align}
with independent real Brownian motions $\{w^i\}$ and local  times $l^i$  satisfying  
\begin{equation}
  dl_t^i \geq 0, \quad l_t^i = \int _0^t \one _{\{x^i_s=x_s^{i+1}\}} dl_s^i.
\label{bessde2}
\end{equation}
 $(X^N_\cdot)$ may thus be considered as system of coupled two sided real Bessel processes with uniform Bessel dimension $\delta= \frac \beta N$. Similar to the real Bessel process $BES(\delta)$ with Bessel dimension $\delta < 1$, the existence of $X^N$ is not a trivial fact. By analogy one should expect that the Skorokhod-SDE  defined by  (\ref{bessde1}) and (\ref{bessde2})  is ill-posed, but that nevertheless $\E^N$ generates a Feller semigroup on $\Sigma_N$.\\
\end{remark}
\begin{remark} For simulation the dynamics of $(X^N_\cdot)$  can be approximated by $X^{N,\epsilon}_t= \mathcal X^{N,\epsilon}_{\lfloor t/\epsilon^2\rfloor}$, $t\geq 0$, where $(\mathcal X^{N,\epsilon}_n)_{n\geq 0}$ is the Markov chain on $\Sigma_N$ with transition kernel $\mu^{N,\epsilon}(x,A) = \frac { q_N(B_\epsilon(x)\cap \Sigma_N\cap A)}  { q_N(B_\epsilon(x)\cap \Sigma_N)}$. An alternative approach via  a regularized version of the formal SDE  (\ref{bessde1}) and (\ref{bessde2}) was pursued by Theresa Heeg (Bonn). For illustration we present her results for the case of $N=4$ particles, starting from an equidistant configuration, with  $\beta=10, \beta =1$ and $\beta=0.3$ respectively, at large times. \renewcommand{\thesubfigure}{}
\begin{figure}[htbp]
	\begin{center}
\subfigure[$\beta=10$]{ \scalebox{0.35}{   \includegraphics{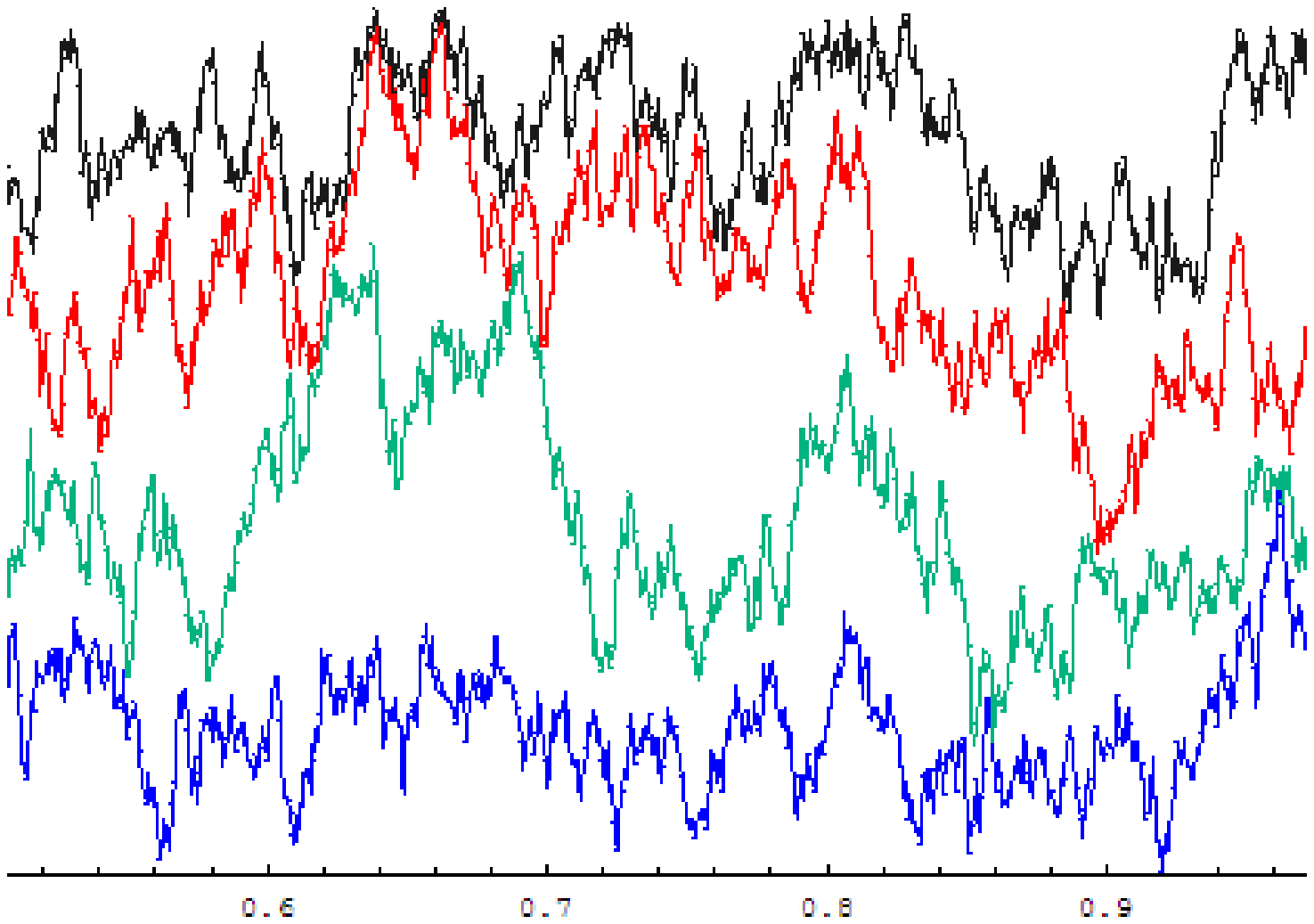} } }
\subfigure[$\beta=1$]{ \scalebox{0.35}{   \includegraphics{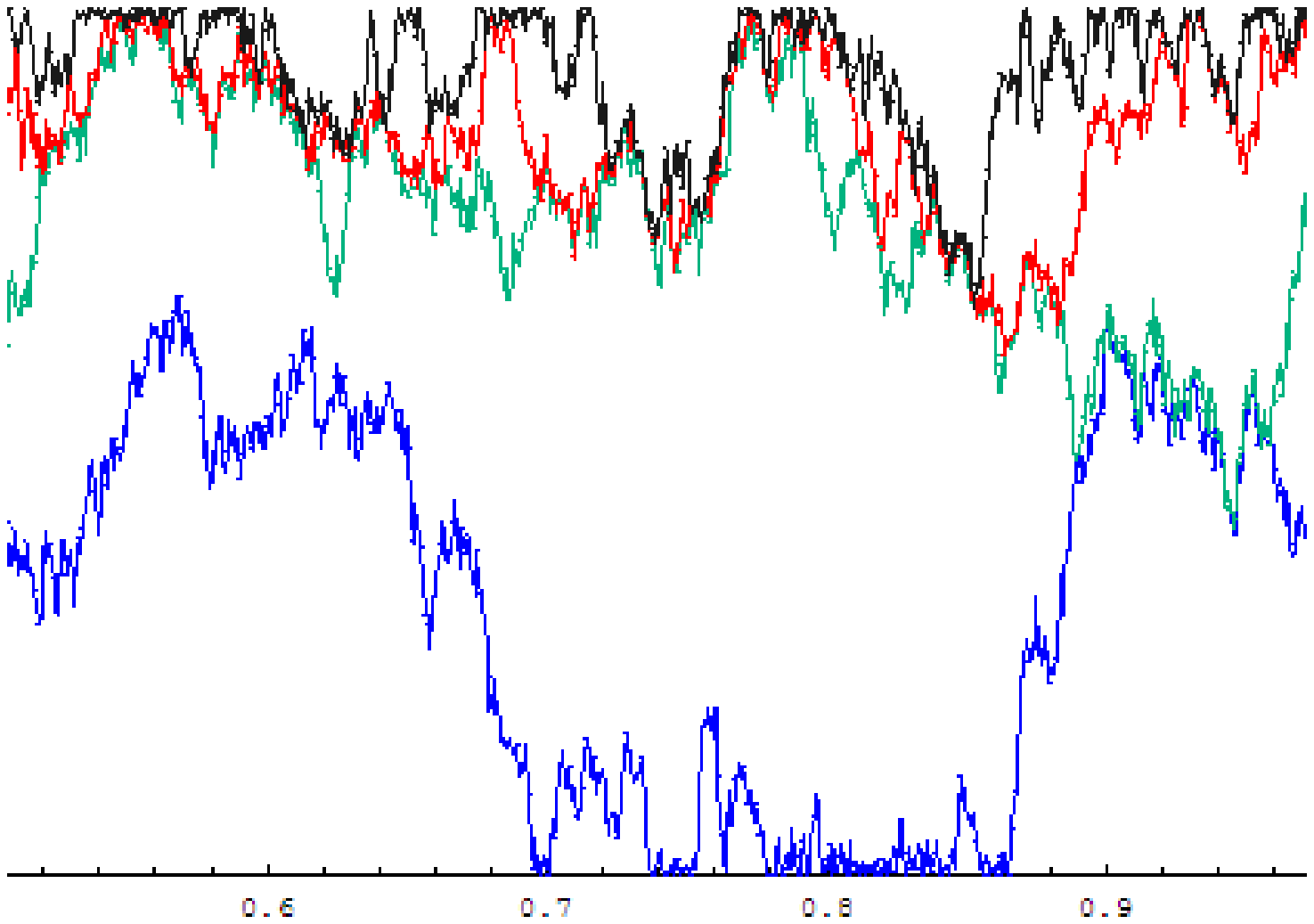} } }
\subfigure[$\beta=0.3$]{ \scalebox{0.35}{   \includegraphics{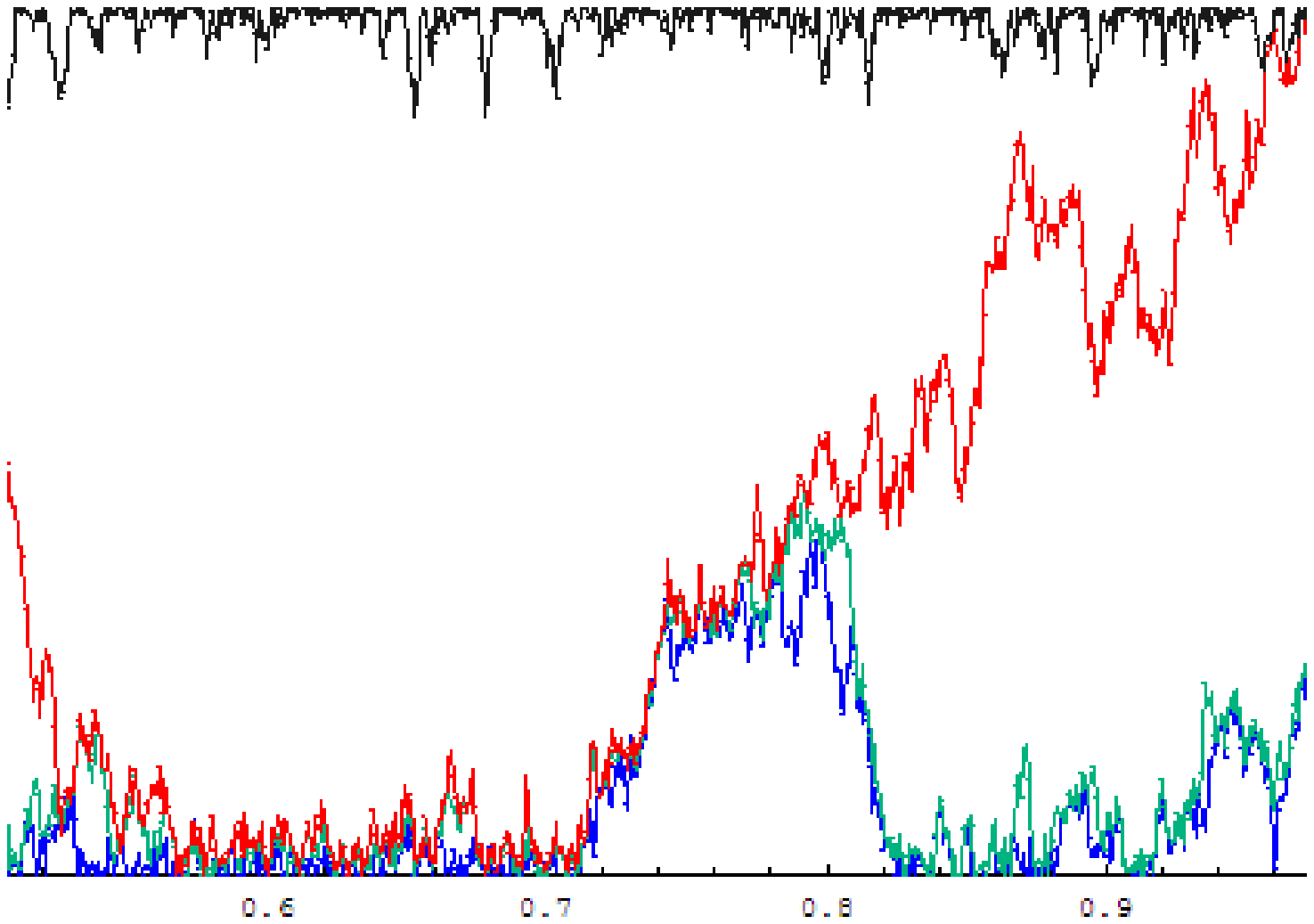} } }	
\end{center}
\label{fig:numsim}
\end{figure}
\pagebreak

\end{remark}

\section{Proof of theorem \ref{mainthm}} 
\subsection{Tightness}
As usual we show compactness of the laws of $(\mu_.^N)$ and, in a second step the uniqueness of the limit.
\begin{proposition} \label{tighness} \textit{The sequence $(\mu_.^N)$ is tight in $C_{\R_+}((\P(\eI),\tau_w))$.}
\end{proposition}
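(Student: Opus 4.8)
The plan is to reduce tightness in $C_{\R_+}((\P(\eI),\tau_w))$ to uniform Kolmogorov moduli for a countable family of scalar observables. Since $\eI$ is compact, $(\P(\eI),\tau_w)$ is a compact metrizable space, and the weak topology is induced by
\[
d(\mu,\nu)=\sum_{m\ge1}2^{-m}\bigl(1\wedge\bigl|\langle\phi_m,\mu\rangle-\langle\phi_m,\nu\rangle\bigr|\bigr),
\]
where we take $\{\phi_m\}$ to be the Neumann eigenfunctions $\phi_m(x)=\cos((m-1)\pi x)$; these are smooth, satisfy $\phi_m'(0)=\phi_m'(1)=0$, and their linear span is dense in $C(\eI)$. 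By the Arzel\`a--Ascoli description of relatively compact subsets of $C_{\R_+}(\P(\eI))$ (compactness of $\P(\eI)$ makes the pointwise condition automatic) and since $d$ is dominated by its partial sums, it suffices to show that for each fixed $m$ the processes $(\langle\phi_m,\mu^N_\cdot\rangle)_N$ are tight in $C_{\R_+}(\R)$, with a modulus of continuity uniform in $N$.

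Fix $\phi=\phi_m$ and put $F_\phi(x)=\frac1{N-1}\sum_{i=1}^{N-1}\phi(x^i)$ on $\Sigma_N$, so $\langle\phi,\mu^N_t\rangle=F_\phi(X^N_{Nt})$. By the Remark after Theorem \ref{mainthm}, $F_\phi$ lies in $\Dom(L_N)$ (it is smooth and has vanishing normal derivative along $\partial\Sigma_N$, using $\phi'(0)=\phi'(1)=0$), and, $\E^N$ being local, the Fukushima decomposition on the time scale $Nt$ reads
\[
\langle\phi,\mu^N_t\rangle=\langle\phi,\mu^N_0\rangle+\int_0^t\bigl(NL^NF_\phi\bigr)(X^N_{Ns})\,ds+M^{N,\phi}_t,
\]
with $M^{N,\phi}$ a continuous martingale whose bracket has density $2N|\nabla F_\phi|^2(X^N_{Nt})=\frac{2N}{(N-1)^2}\sum_i(\phi'(x^i_{Nt}))^2\le4\|\phi'\|_\infty^2$ for $N\ge2$. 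Hence $[M^{N,\phi}]$ is uniformly Lipschitz, and Burkholder--Davis--Gundy gives $\Erw[|M^{N,\phi}_t-M^{N,\phi}_s|^4]\le C\|\phi'\|_\infty^4|t-s|^2$.

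The key point is a uniform bound on the drift. With the explicit generator,
\[
NL^NF_\phi=\frac{N}{N-1}\Bigl[\bigl(\tfrac\beta N-1\bigr)S+T\Bigr],\quad
S=\sum_{i=1}^{N-1}\Bigl(\tfrac1{x^i-x^{i-1}}-\tfrac1{x^{i+1}-x^i}\Bigr)\phi'(x^i),\quad T=\sum_{i=1}^{N-1}\phi''(x^i),
\]
with $x^0=0,x^N=1$. Abel summation, together with $\phi'(0)=\phi'(1)=0$ to dispose of the boundary contributions, rewrites $S=\sum_{i=1}^{N}\frac{\phi'(x^i)-\phi'(x^{i-1})}{x^i-x^{i-1}}=\sum_{i=1}^{N}\phi''(\zeta_i)$ for some $\zeta_i\in(x^{i-1},x^i)$. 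Thus $|\tfrac\beta N S|\le\beta\|\phi''\|_\infty$, and $T-S=\sum_{i=1}^{N-1}\phi''(x^i)-\sum_{i=1}^{N}\phi''(\zeta_i)$ is, after pairing $\phi''(x^i)$ with $\phi''(\zeta_{i+1})$, bounded in absolute value by $\|\phi''\|_\infty+\|\phi'''\|_\infty\sum_i(x^i-x^{i-1})\le\|\phi''\|_\infty+\|\phi'''\|_\infty$; the singular terms have cancelled. Therefore $\sup_N\sup_{x\in\Sigma_N}|NL^NF_\phi(x)|=:C_\phi<\infty$, the drift part of $\langle\phi,\mu^N_\cdot\rangle$ is $C_\phi$-Lipschitz uniformly in $N$, and combining with the martingale estimate,
\[
\Erw\bigl[\bigl|\langle\phi,\mu^N_t\rangle-\langle\phi,\mu^N_s\rangle\bigr|^4\bigr]\le C'_\phi\,|t-s|^2\qquad(|t-s|\le1),
\]
uniformly in $N$. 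By Kolmogorov--Chentsov the family $(\langle\phi,\mu^N_\cdot\rangle)_N$ is tight in $C_{\R_+}(\R)$ with uniform H\"older moduli, and the reduction of the first paragraph gives the proposition.

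The delicate step is the drift bound: individually the terms $1/(x^i-x^{i-1})$ are not $q_N$-integrable (under $q_N$ the gaps are $\mathrm{Beta}(\beta/N,\beta(N-1)/N)$-distributed with shape $\beta/N<1$), so one cannot estimate $NL^NF_\phi$ term by term. The uniform bound relies entirely on the exact telescoping Abel-summation cancellation available when testing against a function with vanishing Neumann data, the same mechanism that, followed more carefully, produces the $\Gaps(\mu)$-term in the limiting drift (\ref{martpart}); for tightness only the resulting $O(1)$ a~priori bound is needed, which moreover makes the argument insensitive to the choice of initial law.
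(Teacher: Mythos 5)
Your proposal is correct and follows essentially the same route as the paper: reduce to tightness of the scalar semimartingales $\langle\phi,\mu^N_\cdot\rangle$ for a dense family of Neumann test functions, bound the martingale bracket by $\|\phi'\|_\infty^2$ uniformly in $N$, and obtain the uniform $O(1)$ drift bound from exactly the same Abel-summation cancellation of the singular terms $1/(x^i-x^{i-1})$ enabled by $\phi'(0)=\phi'(1)=0$. The only (immaterial) differences are that the paper invokes Dawson's reduction theorem and Aldous' criterion where you metrize $\P(\eI)$ by hand and use BDG plus Kolmogorov--Chentsov.
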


\begin{proof}
 According to theorem 3.7.1 in \cite{MR1242575} it is sufficient to show that the sequence $(\langle f, \mu_.^N\rangle)_{N \in \N}$ is tight, where   $f$  is taken from a dense subset  in $\mathcal F \subset C(\eI)$. Choose $\mathcal F : = \{ f \in C^3(\eI)\, |, f'(0)=f'(1)=0\}$, then 
$\langle f , \mu^N_t \rangle = F^N(X^N_{N\cdot t})$ 
with \[ F^N(x)=\frac 1 {N-1} \sum_{i=1}^{N-1} f(x^i).\]
The condition $f'(0)=f'(1)=0$ implies $F^N \in \D(L^N)$.  Moreover, for $\for x\in \mbox{Int}(\Sigma_N)$
\begin{align*}
N \cdot&  L^N F^N(x)  =  \frac \beta {N-1}   \sum_{i=1}^{N} \frac{f'(x^i)-f'(x^{i-1})}{x^i-x^{i-1}} \\
& + \frac N {N-1} \sum_{i=1}^{N-1} \left( f''(x^i)- \frac{f'(x^i)-f'(x^{i-1})}{x^{i}-x^{i-1}} \right) + \frac N {N-1}  \frac{f'(x^N)-f'(x^{N-1})}{x^N -x^{N-1}},
\end{align*}
such that 
\[  |N\cdot  L^N F^N(x)| \leq   \norm{f''}_\infty \frac {N \cdot (\beta  +1)} {N-1} +  \norm{f'''}_\infty \frac N {N-1}\leq C(\beta, \norm{f}_{C^3(\eI)}). \]
This implies a uniform in $N$ Lipschitz bound for the BV part in the Doob-Meyer decomposition of $F^N(X^N_{N.})$.  The process  $X^N$ has continuous sample paths with square field operator $\Gamma(F,F) = L (F^2) - 2F \cdot LF = |\nabla F|^2$. Hence
the quadratic variation of the martingale part of $F^N(X^N_{N\cdot})$ satisfies
\[ [F^N(X^N_{N\cdot})]_t- [F^N(X^N_{N\cdot})]_s =  N \cdot \int_s^t  |\nabla F^N|^2(X^N_s) ds = \frac N {(N-1)^2} \cdot \int_s^t  \sum_{i=1}^{N-1} (f')^2(x^i_s) ds \leq 2 (t-s)  \norm{f'}^2_\infty.  \] Since
\[
F^N(X^N_0)=\frac 1 {N-1} \sum_{i=1}^{N-1} f(D^\beta_{i/N}) \rightarrow \int_0^1 f(D_s^\beta) \, ds \qquad \text{$\Qbeta$-a.s.},
\]
 the law of $F^N(X^N_0)$ is convergent. Using now Aldous' tightness criterion in an appropriate version on sequences of semi-martingales the assertion follows, cf. corollary 3.6.7. in \cite{MR1242575}.\bbox
\end{proof}
\begin{remark}
Using the symmetry of $(X^N_\cdot)$  we could have used  the Lyons-Zheng decomposition for the tightness proof instead. The argument above shows the balance of  first and second order parts of $N \cdot L^N$ as $N$ tends to infinity.
\end{remark}

\subsection{Identification of the Limit}

\subsubsection{The $\G$-Parameterization} 
In order to identify the limit of the sequence $(\mu^N_.)$ we parameterize the space $\P(\eI)$  in terms of right continuous quantile functions, cf. \cite{vrs07}.  The set  \[  \G = \{ g: [0,1)\to \eI\, |\, g \mbox{ cadlag nondecreasing}\}, \] 
 equipped with the $L^2(\eI, dx)$ distance $d_{L^2}$ is a compact subspace of $L^2(\eI,dx)$.  It is homeomorphic to $(\P(\eI), \tau_w)$ by means of the map
\[ \rho:  \G \to \P(\eI),  \quad \quad g\to g_*(dx), \] which takes a function $g\in \G$ to the image measure of $dx$ under $g$. The inverse map $\kappa= \rho^{-1} : \P(\eI) \to \G$ is realized by taking the right continuous quantile function. 
\\

For technical reasons we introduce the following modification of $(\mu^N_.)$ which is better behaved in terms of the map $\kappa$.
\begin{lemma} \label{equivalence} \textit{ For $N \in \N$ define the Markov process   
\[ \nu ^N_t := \frac {N-1}{N} \mu^N_t + \frac 1 N \delta_0 \in \P(\eI),\] 
then  $(\nu^{N'}_.)$ is convergent  on $C_{\R_+}((\P(\eI),\tau_w))$ along any subsequence $N'$ if and only if $(\mu^{N'}_.)$ is. In this case both limits coincide.}
\end{lemma}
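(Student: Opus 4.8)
The plan is to show that $\nu^N_\cdot$ and $\mu^N_\cdot$ are uniformly close in the sup metric on $C_{\R_+}((\P(\eI),\tau_w))$, so that tightness and the identification of limits transfer automatically. Fix a bounded complete metric $d_w$ for $\tau_w$ on $\P(\eI)$; concretely one can take $d_w(\mu,\mu') = \sup\{|\langle f,\mu\rangle - \langle f,\mu'\rangle| : f \in \mathcal C\}$ for a fixed countable convergence-determining family $\mathcal C$ of $1$-Lipschitz functions $f$ with $\|f\|_\infty \le 1$. The key point is that $\nu^N_t$ is, for every $t$, a deterministic convex-combination perturbation of $\mu^N_t$, with a weight $1/N$ that is non-random and tends to zero uniformly in $t$ and in the realization of the path.

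First I would estimate, for arbitrary $\mu \in \P(\eI)$, the distance $d_w\bigl(\tfrac{N-1}{N}\mu + \tfrac1N\delta_0,\ \mu\bigr)$. For any $f \in \mathcal C$,
\[
\Bigl|\langle f,\ \tfrac{N-1}{N}\mu + \tfrac1N\delta_0\rangle - \langle f,\mu\rangle\Bigr| = \tfrac1N\,\bigl|f(0) - \langle f,\mu\rangle\bigr| \le \tfrac2N,
\]
so $d_w(\nu^N_t,\mu^N_t) \le 2/N$ for every $t \ge 0$ and every sample path, hence $\sup_{t\ge0} d_w(\nu^N_t,\mu^N_t) \le 2/N \to 0$ almost surely. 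In particular, if $d_\infty$ denotes the metric $d_\infty(\omega,\omega') = \sum_{k} 2^{-k}\bigl(1 \wedge \sup_{t \le k} d_w(\omega(t),\omega'(t))\bigr)$ inducing the topology of uniform convergence on compacts on $C_{\R_+}((\P(\eI),\tau_w))$, then $d_\infty(\nu^N_\cdot,\mu^N_\cdot) \le 2/N \to 0$ deterministically.

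Second, this uniform closeness gives the equivalence of convergence in law along subsequences. If $(\mu^{N'}_\cdot)$ converges in law to some limit $\mu_\cdot$, then since $d_\infty(\nu^{N'}_\cdot,\mu^{N'}_\cdot) \to 0$ in probability (indeed surely), Slutsky's theorem for metric-space-valued random variables yields $\nu^{N'}_\cdot \Rightarrow \mu_\cdot$ as well; and symmetrically, starting from convergence of $(\nu^{N'}_\cdot)$ one recovers convergence of $(\mu^{N'}_\cdot)$ to the same limit. One small point worth checking is measurability: $\nu^N_\cdot$ is a genuine $C_{\R_+}((\P(\eI),\tau_w))$-valued random variable because $t \mapsto \nu^N_t$ inherits continuity from $t \mapsto \mu^N_t$ (the map $\mu \mapsto \tfrac{N-1}{N}\mu+\tfrac1N\delta_0$ is continuous on $(\P(\eI),\tau_w)$), and that $\nu^N_\cdot$ is Markov follows since it is the image of the Markov process $X^N$ under a fixed Borel map, the same way $\mu^N$ is.

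I do not expect a genuine obstacle here — the statement is essentially a soft perturbation lemma, and the only mild subtlety is to phrase the Slutsky argument correctly on the Polish path space and to note that the deterministic (rather than merely stochastic) nature of the $1/N$ perturbation makes the comparison completely clean. The reason the lemma is stated at all is bookkeeping: in the $\G$-parameterization via $\kappa$, the measure $\nu^N_t$ (which always charges $0$) has a quantile function $\kappa(\nu^N_t)$ that behaves better near the left endpoint, which is what subsequent steps exploit; but that role plays no part in the proof of the equivalence itself.
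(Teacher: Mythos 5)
Your argument is correct, but it takes a different route from the paper's. The paper never metrizes $\tau_w$ nor compares paths in the path space directly: it projects everything onto the real-valued processes $\langle f,\mu^N_\cdot\rangle$, uses the identity $\langle f,\nu^N_t\rangle=\tfrac{N-1}{N}\langle f,\mu^N_t\rangle+\tfrac1N f(0)$ (the same $O(1/N)$ observation as yours) to see that tightness and limits of these scalar processes transfer, invokes Dawson's Theorem 3.7.1 — the same projection criterion already used in Proposition \ref{tighness} — to transfer tightness of the measure-valued processes, and then identifies any pair of subsequential limits by noting that the maps $l_f:(m_t)_{t\ge0}\mapsto(\langle f,m_t\rangle)_{t\ge0}$ are continuous and that the induced laws under all $l_f$, $f\in C(\eI)$, determine the law on $C_{\R_+}(\P(\eI))$. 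You instead fix a metric $d_w$ for $\tau_w$, obtain the deterministic bound $\sup_{t}d_w(\nu^N_t,\mu^N_t)\le 2/N$, and finish with the convergence-together (Slutsky) lemma on the Polish path space. Both proofs rest on the same trivial perturbation estimate; yours is more elementary and quantitative (no appeal to Dawson's criterion, and it gives sure closeness in the sup metric), at the price of the checks you rightly flag — that the supremum over your uniformly bounded, equicontinuous countable family really metrizes $\tau_w$ (an Arzel\`a--Ascoli point) and that the path space is separable so Slutsky applies — whereas the paper's version reuses machinery already set up for the tightness proof and avoids any metrization. One minor caution on your aside: the image of a Markov process under a fixed Borel map is not Markov in general; here $\nu^N$ is Markov because the map from the ordered configuration $X^N_{N\cdot t}\in\Sigma_N$ to the perturbed empirical measure is injective (the atoms and their multiplicities recover the ordered vector), not merely because it is Borel — but since the Markov property is part of the definition rather than the assertion, this does not affect the lemma.
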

\begin{proof} For any $f\in C(\eI)$ the sequence $(\langle f, \mu^{N'}\rangle)_{N'}$ is tight if and only if the same holds true for the sequence  $(\langle f, \nu^{N'}\rangle)_{N'}$, where the limits coincide. Using Theorem 3.7.1 in \cite{MR1242575} again, this implies  $(\mu^{N'})$ is tight in case  $(\nu^{N'})$ is and vice versa. Since the map $l_f: C_{\R_+}(\P(\eI)) \to C_{\R_+}(\R)$, $(m_t)_{t\geq 0} \to (\langle m_t,f\rangle)_{t\geq 0}$ is continous for $f \in C(\eI)$ we conclude that the respective laws of $l_f$ on $C_{\R\geq 0}(\R)$ induced by any two potential limits of $(\mu^{N'}_.)$ and $(\nu^{N'}_.)$ coincide. Hence those limits must in fact be identical. \bbox
\end{proof}

Let $(g^N_\cdot) := (\kappa (\nu^N_\cdot))$ be the process $(\nu^N_\cdot)$ in the $\G$-parameterization. It can also be  obtained by 
\[g^N_t  = \iota (X^N_{N\cdot t}) \]
with the imbedding $\iota = \iota ^N$
\[\iota :\Sigma_N \to \G,\quad \quad \iota (x)= 
\sum_{i=0}^{N-1} x^i \cdot \one_{[i/N,(i+1)/N)}.
\] 

Similarly, let   $(g_\cdot)=(\kappa(\mu_.))$ be the $\G$-image of the Wasserstein diffusion under the map $\kappa$ with  invariant initial distribution $\Q^\beta$. In \cite[theorem 7.5]{vrs07} it is shown that  
 $(g_\cdot)$ is generated by the Dirichlet form, again denoted by  $\E$, which is obtained as the 
$L^2(\G, \Qbeta)$-closure of  
\[\label{Wasserstein-dir-int}
\E (u,v)=\int_{\G} \langle\nabla u_{|g}(\cdot),  \nabla v_{|g}(\cdot)\rangle_{L^2(\eI)} \,  \Qbeta(dg), \quad \quad   u,v\in \Cyl^1(\G).
\]
 on  the class 
\[\Cyl^1(\G)= \{u : \G \to \R\, |\, u(g)=U ( \langle
f_1, g\rangle_{L^2 }, \dots ,\langle
f_m, g\rangle_{L^2} ),   U \in C^1_c(\R^m), \{f_i\}_{i=1}^m \subset L^2(\eI),    m\in \N\},\] 
where  $\nabla u_{|g} $  is  the $L^2(\eI, dx)$-gradient of $u$ at $g$.
\smallskip

The convergence of  $(\mu^N_\cdot)$ to $(\mu_\cdot)$  in $C_{\R_+}(\P(\eI),\tau_w)$ is thus equivalent to the convergence of  $(g^N_\cdot)$ to $(g_\cdot)$  in   $C_{\R_+}(\G,d_{L^2})$. By proposition \ref{tighness} and lemma \ref{equivalence}  $(g^N_\cdot)_N$ is a tight sequence of processes on $\G$. The following statement idenitifies  $(g_\cdot)$ as the unique weak limit. 
\begin{proposition} \label{identlimit}
\textit{Let $\E$ be Markov-unique. Then for any  $f\in C(\G^l)$ and $0\leq t_1< \ldots < t_l$, $\mathbb E(f(g^N_{t_1}, \cdots,g^N_{t_l}))  \stackrel{N\to \infty}{\longrightarrow } \mathbb E(f(g_{t_1}, \cdots,g_{t_l}))$.} 
\end{proposition}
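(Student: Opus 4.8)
The plan is to prove convergence of finite-dimensional distributions via the theory of generalized Mosco convergence of Dirichlet forms of Kuwae and Shioya \cite{MR2015170}. The strategy has three main ingredients: (1) set up a common Hilbert space framework in which the finite-particle forms $\E^N$ and the limit form $\E$ all live, (2) verify the two Mosco conditions for the sequence $\E^N$, and (3) transfer Mosco convergence of forms to strong convergence of the associated semigroups, hence to convergence of the finite-dimensional distributions of the stationary processes.

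First I would introduce the approximating spaces and maps: the finite-dimensional state space $\Sigma_N$ carries the probability $q_N$, and the imbedding $\iota^N : \Sigma_N \to \G$ pushes $q_N$ forward to a measure $\Q^N := \iota^N_*(q_N)$ on $\G$, which is the law of $g^N_0$. The first key step is to show $\Q^N \Rightarrow \Q^\beta$ weakly on $(\G, d_{L^2})$; this is essentially the statement, already used in the tightness proof, that the empirical quantile function of the Dirichlet($\beta/N,\dots,\beta/N$) partition converges to the normalized Gamma / Dirichlet process $D^\beta$, and it is the natural ``convergence of reference measures'' hypothesis of the Kuwae--Shioya framework. One then takes $H_N := L^2(\G, \Q^N)$, $H := L^2(\G, \Q^\beta)$, with the reference maps $\Phi_N : H \to H_N$ given by restriction/approximation of continuous cylinder functions (using that bounded continuous functions are a form core on both sides), and checks that $\|\Phi_N u\|_{H_N} \to \|u\|_H$.

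The heart of the argument is verifying the two Mosco conditions for $(\E^N)$ relative to $\E$. For the $\liminf$-condition (Mosco I): given $u_N \in H_N$ converging weakly to $u \in H$ in the generalized sense, one must show $\E(u,u) \le \liminf_N \E^N(u_N,u_N)$; here the transfer between $\E^N(f,f) = \int_{\Sigma_N}|\nabla f|^2 dq_N$ and the Wasserstein form $\E(u,u) = \int_\G \|\nabla u_{|g}\|_{L^2(\eI)}^2 \, \Q^\beta(dg)$ works because, under the imbedding $\iota^N$, the Euclidean gradient on $\Sigma_N$ becomes (up to the explicit factor $N/(N-1)^2$ already visible in the tightness computation and the time rescaling by $N$) a discretization of the $L^2(\eI,dx)$-gradient of functions on $\G$ — this is exactly the ``balance of first and second order parts'' remarked after Proposition \ref{tighness}. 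For the $\limsup$/recovery condition (Mosco II): given $u \in \D(\E)$ one must produce $u_N \in \D(\E^N)$ with $\Phi_N$-convergence and $\E^N(u_N,u_N) \to \E(u,u)$; here one takes $u \in \Cyl^1(\G)$ (a form core), restricts it along $\iota^N$ to a smooth Neumann function on $\Sigma_N$, and verifies that the discrete Dirichlet energies converge to $\E(u,u)$ by a Riemann-sum argument, controlling the error using smoothness of $u$ and the already-established convergence $\Q^N \Rightarrow \Q^\beta$.

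Once generalized Mosco convergence $\E^N \to \E$ is established, the Kuwae--Shioya theorem yields strong convergence of the resolvents/semigroups $e^{t L_N} \to e^{t L}$ in the generalized sense, and since all processes start from their respective invariant (reversible) measures $\Q^N$, $\Q^\beta$, this upgrades to convergence of the one-dimensional distributions $\mathbb E(f(g^N_{t_1})) \to \mathbb E(f(g_{t_1}))$ and, by the Markov property together with semigroup convergence applied iteratively, to convergence of all finite-dimensional distributions $\mathbb E(f(g^N_{t_1},\dots,g^N_{t_l})) \to \mathbb E(f(g_{t_1},\dots,g_{t_l}))$ for $f \in C(\G^l)$. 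The one place where Markov uniqueness of $\E$ is genuinely needed is in the identification of the limit form: the Mosco procedure a priori only identifies the limiting form as \emph{some} Dirichlet form on $L^2(\G,\Q^\beta)$ extending (or related to) the closure of the cylinder form on $\Cyl^1(\G)$, and Markov uniqueness guarantees there is only one such, namely $\E$ itself. I expect the main obstacle to be the $\liminf$-condition (Mosco I), since there one has no freedom in choosing the approximating sequence and must control the discrete gradient energy of arbitrary functions $u_N$ from below by the continuum Wasserstein energy, uniformly in $N$, handling both the geometry of the order cone $\Sigma_N$ with its Neumann boundary and the degeneracy of $q_N$ near the boundary faces $\{x^i = x^{i+1}\}$.
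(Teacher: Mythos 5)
Your overall strategy (Kuwae--Shioya convergence with varying base spaces, then strong semigroup convergence iterated over the time marginals) is the paper's strategy, but the proposal leaves out precisely the step that carries the proof: the lower bound (Mosco I). You yourself flag it as ``the main obstacle'' and offer only the heuristic that the Euclidean gradient on $\Sigma_N$ discretizes the $L^2(\eI)$-gradient on $\G$; that heuristic produces recovery sequences (Mosco II), but it gives no mechanism to bound $\liminf_N N\cdot\E^N(u_N,u_N)$ from below by $\E(u,u)$ for an \emph{arbitrary} weakly convergent sequence $u_N$. The paper's missing idea is duality via integration by parts: one establishes the Meyers--Serrin-type identity $(\E(u,u))^{1/2}=\sup_{\zeta\in\Theta}\langle u,\div_\Qbeta\zeta\rangle_H/\norm{\zeta}_T$ (Lemma \ref{myersserrin}), and \emph{this} is where Markov uniqueness is used --- it forces the form $\hat\E$ defined by the right-hand side, which is a Markovian extension of $\E$, to coincide with $\E$. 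Then Mosco I follows by pairing $u_N$ with discrete vector fields $\zeta^N$ whose divergences $\div_{q_N}\zeta^N$ converge strongly to $\div_\Qbeta\zeta$ and with $\norm{\zeta^N}_{T^N}\to\norm{\zeta}_T$ (Lemma \ref{divapprox}); proving that convergence is itself nontrivial, since the discrete drift $V^\beta_{N,\varphi}$ must converge to $V^\beta_\varphi$, which involves the jump set of $\Qbeta$-typical $g$. Your proposal instead locates the role of Markov uniqueness in ``identifying the limiting form among Dirichlet form extensions'' after the Mosco procedure; that is not a step one can actually carry out --- without the dual characterization there is nothing that ties the liminf of the discrete energies to the specific form $\E$ in the first place.

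Two secondary points. First, you verify the recovery condition only for $u$ in a core ($\Cyl^1(\G)$ or polynomials); the standard Kuwae--Shioya theorem asks for it on all of $H$, so one must either extend by a diagonal argument or, as the paper does, prove that (Mosco I) plus the core version (Mosco II') still implies strong resolvent convergence. Second, in the final iteration over $t_1<\dots<t_l$ you need that strong convergence is preserved under multiplication by the test functions $f_i\circ\iota^N$ (the paper's Lemma \ref{iotacont}); this is not automatic from semigroup convergence and the Markov property alone and has to be proved, using boundedness of the $f_i$ and the identification of $\Phi^N$ with a conditional expectation. Your alternative setup $H_N=L^2(\G,\iota^N_*q_N)$ with $\Phi_N u=u\circ\iota^N$ for continuous $u$ is workable in principle (it is the fixed-state-space variant with changing reference measures), but as stated it does not define $\Phi_N$ on $L^2(\Qbeta)$-classes, whereas the conditional-expectation choice of $\Phi^N$, together with Lemma \ref{condexp}, is what makes both the norm convergence $\norm{\Phi^Nu}_{H^N}\to\norm{u}_H$ and the explicit recovery sequences computable.
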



\subsubsection{Finite Dimensional Approximation of Dirichlet Forms in Mosco Sense}

Proposition \ref{identlimit} is proved by showing that the sequence of generating Dirichlet forms  $N\cdot \E^N$ of $(g^N_\cdot)$ on  $L^2(\Sigma_N, q_N)$ converges to  $\E$ on $L^2(\G,\Q)$ in the generalized Mosco sense of Kuwae and Shioya, allowing for varying base $L^2$-spaces. We recall the framework developed in \cite{MR2015170}.

\begin{definition}[Convergence of Hilbert spaces]
\textit{
A sequence of Hilbert spaces $H^N$ converges to a Hilbert space $H$ if there exists a family of linear maps $\{\Phi^N: \, H\rightarrow H^N\}_N$ such that
\[ 
\lim_N \norm{\Phi^N u}_{H^N}=\norm{u}_H, \qquad \text{for all $u\in H$}.
\] 
A sequence $(u_N)_N$ with $u_N \in H_N$  converges strongly to a vector $u\in H$ if there exists a sequence $(\tilde u_N)_N\subset H$ tending to $u$ in $H$ such that
\[
\lim_N \limsup_M \norm{\Phi^M \tilde u_N-u_M}_{H^M}=0,
\]
and $(u_N)$  converges weakly to $u$ if
\[
\lim_N \langle u_N,v_N\rangle_{H^N}=\langle u,v \rangle_H,
\]
for any sequence $(v_N)_N$ with $v_N\in H^N$ tending strongly to $v\in H$. Moreover, a sequence $(B_N)_N$ of bounded operators on $H^N$ converges strongly (resp.\ weakly) to an operator $B$ on $H$ if $B_N u_N \rightarrow Bu$ strongly (resp.\ weakly) for any sequence $(u_N)$ tending to $u$ strongly (resp.\ weakly).} 
\end{definition}

\begin{definition}[Mosco Convergence]
\textit{
A sequence $(E^N)_N$ of quadratic forms $E^N$ on $H^N$ converges to a quadratic form $E$ on $H$ in the Mosco sense if the following two conditions hold:
\begin{description}
\item[Mosco I:] If a sequence $(u_N)_N$ with $u_N\in H^N$ weakly converges to a $u\in H$, then
\[
E(u,u)\leq \liminf_N E^N(u_N,u_N). 
\]
\item[Mosco II:] For any $u\in H$ there exists a sequence $(u_N)_N$ with $u_N\in H^N$ which converges strongly to $u$ such that
\[
E(u,u)=\lim_N E^N(u_N,u_N).
\]
\end{description}}
\end{definition}

Extending  \cite{MR1283033} it is shown in \cite{MR2015170} that Mosco convergence of a sequence of Dirichlet forms is equivalent to the strong convergence of the  associated resolvents and semigroups. We will apply this result when  $H^N= L^2(\Sigma_N, q_N)$, $H= L^2(\G, \Qbeta)$  and  $\Phi^N$ is defined to be the conditional expectation operator   
\[ \Phi^N: H \to H^N; \quad \quad   (\Phi^N u )(x) := \mathbb E(u | g_{i/N} = x_i, i=1, \dots, N-1).\] 
However, we shall prove that the sequence $N\cdot \E^N$ converges to $\E$ in the Mosco sense in a slightly modified fashion, namely the condition (Mosco II) will be replaced by
\begin{description}
\item[Mosco II':] \textit{There is a core $K\subset \D(E)$ such that for any $u\in K$ there exists a sequence $(u_N)_N$ with $u_N\in \D(E^N)$ which converges strongly to $u$ such that $E(u,u)=\lim_N E^N(u_N,u_N)$.}
\end{description}

\begin{theorem} \textit{Under the assumption that $H^N \to H$ the conditions (Mosco I) and (Mosco II') are equivalent to the strong convergence of the associated resolvents.} 
\end{theorem}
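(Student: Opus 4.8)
The plan is to reduce the claimed equivalence to the already-cited Kuwae--Shioya result, which states that ordinary Mosco convergence (conditions (Mosco~I) and (Mosco~II)) of the forms $E^N$ to $E$ is equivalent to strong convergence of the associated resolvents $(G^N_\alpha)$ to $G_\alpha$. Since (Mosco~II$'$) is obviously weaker than (Mosco~II), one direction is immediate: strong resolvent convergence implies (Mosco~I) by the Kuwae--Shioya theorem, and it also implies full (Mosco~II), hence in particular (Mosco~II$'$). So the only thing to prove is that (Mosco~I) together with (Mosco~II$'$) already forces strong resolvent convergence, equivalently that (Mosco~I) and (Mosco~II$'$) together imply the full (Mosco~II).

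First I would recall the variational characterisation of the resolvent: for fixed $\alpha>0$ and $f\in H$, the vector $u=G_\alpha f$ is the unique minimiser of the functional $v\mapsto E(v,v)+\alpha\norm{v}_H^2-2\langle f,v\rangle_H$ over $\D(E)$, and similarly $u_N=G^N_\alpha f_N$ minimises the analogous functional on $H^N$. The strategy for upgrading (Mosco~II$'$) to (Mosco~II) is the standard density/diagonal argument: given an arbitrary $u\in\D(E)$, use that $K$ is a core to pick $w_j\in K$ with $w_j\to u$ in the form norm $\sqrt{E(w_j,w_j)+\norm{w_j}_H^2}$; for each $j$ apply (Mosco~II$'$) to get a strongly convergent recovery sequence $(w_{j,N})_N$ with $E^N(w_{j,N},w_{j,N})\to E(w_j,w_j)$; then extract a diagonal sequence $u_N:=w_{j(N),N}$ with $j(N)\to\infty$ slowly enough that $u_N\to u$ strongly (in the Kuwae--Shioya sense, using that strong convergence is stable under such diagonal extraction together with the definition via the maps $\Phi^M$) and $\limsup_N E^N(u_N,u_N)\le E(u,u)$. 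The reverse inequality $\liminf_N E^N(u_N,u_N)\ge E(u,u)$ is then supplied by (Mosco~I), since $u_N\to u$ strongly implies $u_N\to u$ weakly. Combining gives $\lim_N E^N(u_N,u_N)=E(u,u)$, which is exactly (Mosco~II).

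With full Mosco convergence in hand, the equivalence with strong resolvent convergence is precisely the Kuwae--Shioya theorem, completing the proof. Alternatively, and perhaps more cleanly, one can argue the implication (Mosco~I)$+$(Mosco~II$'$)$\Rightarrow$ resolvent convergence directly at the level of the minimisation problems: (Mosco~I) yields, via a lower-semicontinuity plus weak-compactness argument (the sequence $u_N=G^N_\alpha f_N$ is bounded in form norm, hence has weak limit points), that any weak limit point $u$ of $u_N$ satisfies $E(u,u)+\alpha\norm u^2-2\langle f,u\rangle\le\liminf_N(\dots)$; (Mosco~II$'$) applied to elements of the core $K$, followed by the density argument above, shows $\liminf_N$ of the minimal values is bounded above by $E(v,v)+\alpha\norm v^2-2\langle f,v\rangle$ for all $v\in\D(E)$, hence $u$ is the minimiser $G_\alpha f$ and the convergence is in fact strong (norm convergence of $u_N$ follows from convergence of the quadratic functionals together with the parallelogram identity).

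The main obstacle I anticipate is the diagonal-sequence bookkeeping: one must verify that the Kuwae--Shioya notion of strong convergence — which quantifies over the approximating maps $\Phi^M$ and involves an iterated $\lim_N\limsup_M$ — is genuinely stable under the diagonal extraction $w_{j(N),N}$, choosing the rate $j(N)\to\infty$ compatibly with both the form-norm convergence $w_j\to u$ and the double-limit structure. This is routine but needs care; everything else is a direct invocation of the cited equivalence and the convexity of the resolvent variational problem.
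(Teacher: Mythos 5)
Your proposal is correct, and its two routes bracket the paper's actual argument. The paper does not upgrade (Mosco II') to the full (Mosco II): following Mosco's original resolvent argument, it characterizes $u_N=R^N_\lambda z_N$ and $u=R_\lambda z$ as minimizers of the quadratic functionals, extracts a weak limit point of $(u_N)$ by boundedness (Lemma 2.2 of Kuwae--Shioya), tests the minimality of $u_N$ against the recovery sequences that (Mosco II') provides for core elements $v\in K$ only, passes to the limit using (Mosco I), and removes the restriction $v\in K$ by density of the core in form norm; strong convergence then follows from a second application of the resolvent inequality, weak lower semicontinuity of the norm, and the fact that weak convergence plus convergence of norms is strong convergence (Lemma 2.3 of Kuwae--Shioya). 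That is precisely your ``alternative'' argument, so that route matches the paper almost verbatim. Your primary route---upgrading (Mosco II') to (Mosco II) by a diagonal extraction along a core approximation $w_j\to u$ in form norm and then citing the Kuwae--Shioya equivalence wholesale---is genuinely different and also viable, but it is where all the work hides: you need a diagonal lemma for strong convergence with varying base spaces (manageable via the single-witness $\epsilon$-reformulation of strong convergence, and easier here because the maps $\Phi^N$ are conditional expectations, hence contractions), you must control the energies along the diagonal so that $\limsup_N E^N(u_N,u_N)\le E(u,u)$ before invoking (Mosco I) for the reverse bound, and you should also supply recovery sequences for $u\in H\setminus\D(E)$ (take $\Phi^N u$, which converges strongly, and use (Mosco I)), a case your sketch omits. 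The paper's route buys freedom from this bookkeeping, since recovery sequences are only ever needed for core elements and the density of $K$ enters only through a scalar inequality; your route buys a cleaner conclusion (full Mosco convergence) and a pure citation of the cited equivalence for both directions.
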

\begin{proof}
We proceed as in the proof of theorem 2.4.1 in \cite{MR1283033}. By theorem 2.4 of \cite{MR2015170}  strong convergence of resolvents implies Mosco-convergence in the original stronger sense. Hence  we need to show only that  our weakened notion of Mosco-convergence also implies strong convergence of resolvents.

Let $\{R^N_\lambda, \, \lambda>0\}$ and $\{R_\lambda, \, \lambda>0\}$ be the resolvent operators associated with $E^N$ and $E$, respectively. Then, for each $\lambda >0$ we have to prove that for every $z\in H$ and every sequence $(z_N)$ tending strongly to $z$ the sequence $(u_N)$ defined by $u_N:=R^N_\lambda z_N \in H^N$ converges strongly to $u:=R_\lambda z$ as $N\rightarrow \infty$. The vector $u$ is characterized as the unique minimizer of $E(v,v)+\lambda \langle v,v \rangle_H -2 \langle z, v \rangle_H$ over $H$ and a similar characterization holds for each $u_N$. Since for each $N$ the norm of $R^N_\lambda$ as an operator on $H^N$ is bounded by $\lambda^{-1}$, by Lemma 2.2 in \cite{MR2015170} there exists a subsequence of $(u_N)$, still denoted by $(u_N)$, that converges weakly to some $\tilde u \in H$. By (Mosco II') we find for every $v\in K$ a sequence $(v_N)$ tending strongly to $v$ such that $\lim_N E^N(v_N,v_N)=E(v,v)$. Since for every $N$
\[
E^N(u_N,u_N)+\lambda \langle u_N, u_N \rangle_{H^N} -2 \langle z_N, u_N \rangle_{H^N}
\leq E^N(v_N,v_N)+\lambda \langle v_N, v_N \rangle_{H^N} -2 \langle z_N, v_N \rangle_{H^N},
\]
using the condition (Mosco I) we obtain in the limit $N\rightarrow \infty$:
\[
E(\tilde u,\tilde u)+\lambda \langle \tilde u, \tilde u \rangle_{H} -2 \langle z, \tilde u \rangle_{H}
\leq E(v,v)+\lambda \langle v, v \rangle_{H} -2 \langle z, v \rangle_{H},
\]
which by the definition of the resolvent together with the density of $K  \subset  D(E)$ implies that $\tilde u = R_\lambda z=u.$ This establishes the weak convergence of resolvents. It remains to show strong convergence. Let $u_N = R^N_\lambda z_N$ converge weakly to $u = R_\lambda z$ and choose $ v \in K$ with the respective strong approximations $ v_N \in H^N$ such that $E^N(v_N,v_N)\to E( v,  v)$, then the resolvent inequality for $R^N$ yields 
\[ E^N(u_N,u_N) +\lambda \norm{u_N-z_N/\lambda}^2_{H^N} \leq E^N (v_N,v_N) +\lambda \norm{v_N-z_N/\lambda}^2_{H^N}.\]
Taking the limit for $N \to \infty$, one obtains 
\[ \limsup_N \lambda \norm{u_N-z_N/\lambda}^2_{H^N} \leq E (v,v)- E(u,u)  +\lambda \norm{v-z/\lambda}^2_H.\]
Since $K$ is a dense subset we may now let $v \to u \in D(E)$, which yields 
\[ \limsup_N  \norm{u_N-z_N/\lambda}^2_{H^N} \leq  \norm{u-z/\lambda}^2_H.\] Due to the weak lower semicontinuity of the norm  this yields $\lim_N \norm{u_N-z_N/\lambda}=  \norm{u-z/\lambda}$. Since strong  convergence in $H$ is equivalent to weak convergence together with the  convergence of the associated norms the claim follows (cf.\ Lemma 2.3 in \cite{MR2015170}). \bbox
\end{proof}

Proposition \ref{identlimit} will now essentially be implied by the following statement, which by the definitions above summarizes the subsequent three propositions.

\begin{theorem} \label{mconvergence}
\textit{Assume that  $\E$ is Markov-unique on  ${L^2(\G,\Q)}$. Then  $(N \cdot \E^N, H^N)$ converges to $(\E, H)$ along $\Phi^N$ in Mosco sense.} 
\end{theorem}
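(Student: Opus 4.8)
The plan is to verify the three ingredients separately: the Hilbert-space convergence $H^N\to H$ along $\Phi^N$, the liminf inequality (Mosco I), and the recovery-sequence condition (Mosco II'), the latter on a conveniently chosen core $K\subset\D(\E)$. For the Hilbert-space convergence, I would take $\Phi^N$ to be the conditional expectation $(\Phi^N u)(x)=\mathbb E_{\Qbeta}(u\mid g_{i/N}=x_i,\,i=1,\dots,N-1)$; since conditional expectation is an $L^2$-contraction and the $\sigma$-algebras $\sigma(g_{i/N}:i=1,\dots,N-1)$ increase to the full Borel $\sigma$-algebra on $\G$ (the evaluation maps $g\mapsto g(i/N)$ separate points of $\G$ up to $dx$-null sets, hence generate the Borel structure of $(\G,d_{L^2})$), the martingale convergence theorem gives $\Phi^N u\to u$ in $H$, and in particular $\norm{\Phi^N u}_{H^N}\to\norm{u}_H$. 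This also identifies the natural candidate recovery sequences: for $u\in H$ one expects $\Phi^N u$ (suitably regularised) to be strongly convergent to $u$.

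For \textbf{Mosco II'} I would pick $K$ to be the cylinder class $\Cyl^1(\G)$, or rather a subclass built from smooth functions $f_i$, which is a core for $\E$ by the very definition of $\E$ as the closure of the form on $\Cyl^1(\G)$. Given $u(g)=U(\langle f_1,g\rangle,\dots,\langle f_m,g\rangle)$ with $U\in C^1_c$ and $f_i$ smooth, the obvious finite-dimensional approximant is $u_N(x):=U(\langle f_1,\iota^N(x)\rangle,\dots,\langle f_m,\iota^N(x)\rangle)$, i.e. $u_N=u\circ\rho\circ\iota^N$ on $\Sigma_N$; equivalently one can use $\Phi^N u$, and a short argument shows the two differ by something negligible in $H^N$. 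Strong convergence $u_N\to u$ follows because $\iota^N(X)\to g$ uniformly enough under the pushforward of $q_N$ (the law $q_N$ is exactly the law of the vector of equally-spaced quantiles of the Dirichlet process $D^\beta$, so $\iota^N$ under $q_N$ converges to $g$ under $\Qbeta$ — this is essentially the Glivenko–Cantelli-type statement already used in the tightness proof where $\frac1{N-1}\sum f(D^\beta_{i/N})\to\int_0^1 f(D^\beta_s)ds$). The energy convergence $N\cdot\E^N(u_N,u_N)\to\E(u,u)$ is the computational heart: one writes $\nabla u_N$ in coordinates via the chain rule, $\partial_{x^j}u_N=\sum_i \partial_i U\cdot\frac1N f_i(j/N)$ roughly speaking, so that $N\cdot|\nabla u_N|^2(x)=N\sum_j\bigl(\sum_i\partial_iU\,\tfrac1N f_i(x^j)\bigr)^2$ — wait, more carefully the $x$-derivative of $\langle f_i,\iota^N(x)\rangle=\frac1N\sum_k f_i(x^k)$ is $\frac1N f_i'(x^j)$, giving $N|\nabla u_N|^2=\frac1N\sum_j\bigl(\sum_i\partial_iU(\cdots)f_i'(x^j)\bigr)^2$, a Riemann sum which converges under $q_N$ to $\int_0^1\bigl(\sum_i\partial_iU(\langle f_1,g\rangle,\dots)f_i'(g(t))\bigr)^2dt=\norm{\nabla u_{|g}}^2_{L^2}$, i.e. to $\E(u,u)$. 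I would check uniform integrability of these Riemann sums (the $f_i'$ bounded, $q_N$ tight) to upgrade convergence in law to convergence of expectations.

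For \textbf{Mosco I} the strategy is: given $u_N\in H^N$ weakly convergent to $u\in H$ with $\liminf_N N\E^N(u_N,u_N)<\infty$ (otherwise nothing to prove), test against cylinder functions. For $v\in\Cyl^1(\G)$ with approximants $v_N$ as above, the bilinear forms $N\E^N(u_N,v_N)$ converge to $\E(u,v)$: on the one hand $N\E^N(u_N,v_N)=\int_{\Sigma_N}N\langle\nabla u_N,\nabla v_N\rangle\,dq_N$, and integrating by parts (using the explicit generator $L^N$ from the Remark, whose first-order coefficients are controlled exactly as in the tightness argument — the $(\beta/N-1)$ factors producing the bounded $N\cdot L^N$) this equals $-\int u_N\cdot (N L^N v_N)\,dq_N$, which passes to the limit because $NL^Nv_N$ converges strongly in $H^N$ to $-\mathcal L v$ (the generator of $\E$ applied to $v$ — this is where the delicate balancing of first and second order terms of $N\cdot L^N$, already flagged in the Remark after the tightness proof, is used) while $u_N\rightharpoonup u$ weakly. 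Then the elementary inequality $\E^N(u_N,u_N)\geq 2\E^N(u_N,v_N)-\E^N(v_N,v_N)$, taken in the limit, gives $\liminf_N N\E^N(u_N,u_N)\geq 2\E(u,v)-\E(v,v)$ for all $v$ in the core, and optimising over $v$ (i.e. letting $v\to u$ in the $\E_1$-norm, which also shows $u\in\D(\E)$) yields $\liminf_N N\E^N(u_N,u_N)\geq\E(u,u)$.

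The main obstacle, I expect, is \textbf{Mosco I}, and specifically the claim that $N\cdot L^N v_N$ converges strongly in $H^N$ to the $\E$-generator applied to $v$: this requires the two competing $O(N)$ terms in $N\cdot L^N$ (the drift $\sim(\beta/N-1)\sum(\frac1{x^i-x^{i-1}}-\frac1{x^{i+1}-x^i})\partial_i$ and the Laplacian $\Delta$) to combine into something finite, and the precise limiting expression must reproduce the somewhat exotic ``gaps'' drift in the martingale problem \eqref{martpart} — matching the discrete $\sum_i\bigl(f''(x^i)-\frac{f'(x^i)-f'(x^{i-1})}{x^i-x^{i-1}}\bigr)$ and the boundary terms to their continuum counterparts is a genuine computation that relies on fine properties of the quantile parameterisation and on the structure of $\Qbeta$ near the support boundary. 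A secondary subtlety is that $\E$ and the $\E^N$ need not be regular, so throughout one must be careful that $\Cyl^1(\G)$ really is a core (guaranteed here by the definition of $\E$ as its closure) and that the Markov-uniqueness hypothesis is invoked only where it is genuinely needed, namely to conclude that the Dirichlet form obtained as the Mosco limit — a priori only known to be an extension of the form on the cylinder core sitting inside $\D(\E)$ — actually coincides with $\E$ rather than some other Markovian extension.
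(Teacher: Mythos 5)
Your overall architecture (convergence $H^N\to H$ along the conditional-expectation maps $\Phi^N$, Mosco I, and Mosco II' on a core of cylinder/polynomial functions) matches the paper, and your Mosco II' argument is essentially the paper's: there the core is the set of polynomials in the linear functionals $l_{f}$, the approximants are $\prod_i(\Phi^N l_{f_i})^{k_i}$ (which by lemma \ref{condexp} are explicit piecewise-linear interpolations), and the energy convergence is exactly your Riemann-sum/dominated-convergence computation. But your Mosco I argument has a genuine gap at its decisive step. After obtaining $\liminf_N N\E^N(u_N,u_N)\ge -2\langle u,\mathcal Lv\rangle_H-\E(v,v)$ for $v$ in the core, you ``optimise over $v$ by letting $v\to u$ in the $\E_1$-norm'' and write the right-hand side as $2\E(u,v)-\E(v,v)$. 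This is circular: the weak limit $u$ is a priori only an element of $H$, so $\E(u,v)$ is undefined and no $\E_1$-approximation of $u$ by core elements is available; indeed proving $u\in\D(\E)$ is the whole content of Mosco I. What the supremum $\sup_v\{-2\langle u,\mathcal Lv\rangle-\E(v,v)\}$ (or, in the paper's first-order formulation, $\sup_\zeta\langle u,\div_\Qbeta\zeta\rangle/\norm{\zeta}_T$) actually defines is some quadratic form $\hat\E$ whose domain may strictly contain $\D(\E)$; to conclude $\hat\E(u,u)=\E(u,u)$ one must show that $\hat\E$ is a \emph{Markovian} extension of $\E$ and then invoke Markov uniqueness --- this is precisely the paper's Meyers--Serrin lemma \ref{myersserrin}, and it is formulated with vector fields $\zeta(g,t)=w(g)\varphi(g(t))$, $\varphi(0)=\varphi(1)=0$, and the divergence operator $\div_\Qbeta$ rather than with the generator, because in that formulation the Markovianity of the dual form is standard (Eberle's argument). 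Your proposal gestures at this identification in the last sentence but does not supply it, and as a result the key mechanism by which the hypothesis of the theorem enters is missing from the proof itself.

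Two further points where your route would need repair. First, you rely on strong convergence of $N L^N v_N$ to the generator of $\E$ applied to $v$; the paper deliberately avoids any second-order statement and instead proves (lemma \ref{divapprox} and the two lemmas following it) the strong convergence of the \emph{divergences} $\div_{q_N}\zeta^N\to\div_\Qbeta\zeta$, a first-order computation which is already delicate (Taylor expansion plus a decomposition along the largest jumps of $g$); moreover your discrete integration by parts $N\E^N(u_N,v_N)=-\langle u_N,NL^Nv_N\rangle_{H^N}$ produces boundary terms unless the test objects vanish appropriately at $0$ and $1$, which is exactly why the paper's class $\Theta$ imposes $\varphi(0)=\varphi(1)=0$; you do not address this. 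Second, your proof of $H^N\to H$ via martingale convergence assumes the $\sigma$-algebras $\sigma(g_{i/N}:i\le N-1)$ are increasing in $N$; they are not (the grids are not nested except along dyadic subsequences, as the paper remarks), and the paper instead verifies the hypothesis of the Alonso--Brambila-Paz criterion (lemma \ref{condconv}) by approximating finitely based cylinder sets, using the a.s.\ continuity of $g$ at the fixed times. This last point is repairable, but as written the argument is incorrect for the full sequence.
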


\begin{proposition} \label{hconvergence} \textit{$H^N $ converges to $H$ along $\Phi^N$, for $N\to \infty$.}
\end{proposition}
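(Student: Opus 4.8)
The plan is to verify the single defining condition for $H^N \to H$ along the maps $\Phi^N$, namely that $\|\Phi^N u\|_{H^N} \to \|u\|_H$ for every $u \in H = L^2(\G,\Q^\beta)$. Recall that $\Phi^N$ is the conditional expectation operator $(\Phi^N u)(x) = \mathbb E_{\Q^\beta}(u \,|\, g_{i/N} = x_i,\ i=1,\dots,N-1)$, and that the law $q_N$ on $\Sigma_N$ is precisely the law under $\Q^\beta$ of the evaluation vector $(g_{1/N},\dots,g_{(N-1)/N})$; this compatibility between $q_N$ and $\Q^\beta$ is the key structural fact (it is exactly the finite-dimensional distribution of the Dirichlet/Gamma process), and I would first state it explicitly, citing \cite{vrs07}. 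Granting this, $\Phi^N$ is a genuine (orthogonal) conditional expectation, hence a contraction: $\|\Phi^N u\|_{H^N} \le \|u\|_H$ for all $N$, so only the $\liminf$ lower bound requires work.

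Next I would introduce the $\sigma$-algebras $\mathcal G_N := \sigma(g_{i/N} : i = 1,\dots,N-1)$ on $\G$, so that $\|\Phi^N u\|_{H^N}^2 = \mathbb E_{\Q^\beta}\big[ (\mathbb E_{\Q^\beta}(u \mid \mathcal G_N))^2 \big] = \|\mathbb E_{\Q^\beta}(u \mid \mathcal G_N)\|_H^2$. The point is then that the $\mathcal G_N$ increase to (a $\Q^\beta$-completion of) the full Borel $\sigma$-algebra on $\G$: a cadlag nondecreasing function $g:[0,1)\to[0,1]$ is determined on the dense set of dyadic rationals by the values $g_{i/2^n}$, and by right-continuity determined everywhere, so $\sigma\big(\bigcup_n \mathcal G_{2^n}\big)$ separates points of $\G$ and generates the Borel $\sigma$-algebra of $(\G, d_{L^2})$. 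By the martingale convergence theorem (Lévy's upward theorem), $\mathbb E_{\Q^\beta}(u \mid \mathcal G_{2^n}) \to u$ in $L^2(\G,\Q^\beta)$, hence $\|\Phi^{2^n} u\|_{H^{2^n}} \to \|u\|_H$ along the subsequence $N = 2^n$. To upgrade from the dyadic subsequence to the full sequence, I would note that the contraction property $\|\Phi^N u\|_{H^N} \le \|u\|_H$ holds for all $N$, while for the lower bound one can either observe that $\mathcal G_N \supset \mathcal G_m$ whenever the grid $\{i/m\}$ refines to a subgrid of $\{j/N\}$ — which fails for general $m,N$ — or, more robustly, argue directly: for fixed $u$ and $\varepsilon>0$ pick a $d_{L^2}$-continuous cylinder function $v \in \Cyl^1(\G)$ with $\|u-v\|_H < \varepsilon$; since $v$ depends continuously on finitely many $L^2$-pairings $\langle f_i, g\rangle$, and the piecewise-constant interpolation on the grid $\{i/N\}$ of a cadlag $g$ converges to $g$ in $L^2(\eI,dx)$ as $N\to\infty$ uniformly enough, one gets $\Phi^N v \to v$ strongly and $\|\Phi^N v\|_{H^N} \to \|v\|_H$ for the \emph{full} sequence; then combine with the uniform contraction bound via a triangle-inequality/$3\varepsilon$ argument to conclude $\|\Phi^N u\|_{H^N} \to \|u\|_H$.

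The main obstacle I anticipate is precisely this last passage from a subsequence to the full sequence, together with making the ``$\Phi^N v \to v$ for cylinder functions'' step clean: one needs that for $v(g) = U(\langle f_1,g\rangle,\dots,\langle f_m,g\rangle)$ the conditional expectation $\Phi^N v$ is well-approximated by $U$ evaluated at the Riemann-sum/interpolation approximations of the pairings $\langle f_i, g\rangle$ built from $(g_{1/N},\dots,g_{(N-1)/N})$, which requires controlling the conditional fluctuations of $g$ between grid points under $\Q^\beta$ — a quantitative input about the Dirichlet process that should follow from the explicit Beta/Dirichlet structure of its finite-dimensional marginals (the conditional law of $g$ on $[i/N,(i+1)/N)$ given the endpoints is, up to rescaling, itself a Dirichlet bridge, whose oscillation is $O(1/N)$ in probability). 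Everything else — contractivity of conditional expectation, density of cylinder functions in $H$, and Lévy's theorem — is standard. I would therefore structure the proof as: (1) identify $q_N$ with the grid-marginal of $\Q^\beta$ and deduce $\|\Phi^N\| \le 1$; (2) show $\Phi^N v \to v$ and $\|\Phi^N v\|_{H^N}\to\|v\|_H$ for $v\in\Cyl^1(\G)$ using the $L^2$-convergence of grid interpolations plus the oscillation estimate; (3) conclude for general $u\in H$ by density.
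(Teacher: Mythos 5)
Your proposal is correct, and it reaches the conclusion by a genuinely different route than the paper for the delicate part (the full sequence $N$, where the $\sigma$-algebras are not nested). Both arguments begin the same way: identify $\Phi^N u$ with $\mathbb E(u\,|\,\mathcal F^N)$, $\mathcal F^N=\sigma\bigl(g(i/N),\,i=1,\dots,N-1\bigr)$, use that $q_N$ is exactly the grid marginal of $\Qbeta$ to get $\norm{\Phi^N u}_{H^N}=\norm{\mathbb E(u|\mathcal F^N)}_H$, and reduce the claim to $\mathbb E(u|\mathcal F^N)\to u$ in $H$. The paper then invokes the Alonso--Brambila-Paz criterion ($\mathbb E(u|\mathcal F_N)\to u$ for all $u\in L^p$ iff every Borel set is approximable in $\Qbeta$-symmetric difference by $\mathcal F_N$-measurable sets) and verifies it on finitely based open cylinder sets $\{g\,:\,g(t_i)\in O_i\}$, using $\Qbeta$-a.s.\ continuity of $g$ at the fixed times $t_i$; your dyadic L\'evy-martingale observation is exactly the paper's remark following the proposition, and like the paper you correctly recognize it does not settle the full sequence. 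Your substitute --- contraction of $\Phi^N$ plus a $3\varepsilon$/density argument over cylinder functions $v\in\Cyl^1(\G)$ --- is sound, and the step you flag as the main obstacle is actually easier than you expect: since $\mathbb E(\cdot|\mathcal F^N)$ is the orthogonal projection onto the $\mathcal F^N$-measurable subspace of $H$, you need not compare $\Phi^N v$ with $U$ evaluated at grid interpolations at all; it suffices to exhibit \emph{some} $\mathcal F^N$-measurable $w_N$ with $w_N\to v$ in $H$, for instance $w_N(g)=v(g^N)$ with $g^N:=g(1/N)\one_{[0,1/N)}+\sum_{i=1}^{N-1}g(i/N)\one_{[i/N,(i+1)/N)}$, so no Dirichlet-bridge fluctuation estimate is needed. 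Moreover monotonicity makes the interpolation error deterministic: the oscillation of $g$ on each grid cell is bounded by its increment over that cell, and these increments are nonnegative with sum at most one, whence $\norm{g-g^N}^2_{L^2(\eI)}\le 1/N$ for every $g\in\G$, so $v(g^N)\to v(g)$ in $H$ by (bounded) convergence for any continuous $U$. With that simplification your proof closes completely and is arguably more elementary and self-contained; the paper's route buys a slightly stronger statement (convergence of the conditional expectations in every $L^p$) and avoids choosing explicit approximants, at the price of citing the abstract set-approximation lemma.
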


\begin{proof}
We have to show that $\norm{\Phi^N u}_{H^N} \to \norm{u}_{H}$  for each $u \in H$. Let  $\mathcal F^N$ be the $\sigma$-Algebra on $\G$ generated by the projection maps $\{g \to g({i/N})\,|\, i =1, \dots, N-1\}$. By abuse of notation we identify  $\Phi^N u \in H$  with $\mathbb E(u|\mathcal F^N)$ of $u$, considered as an element of $L^2(\Qbeta, \mathcal F^N) \subset H$. Since the measure $q_N$ coincides with the respective finite dimensional distributions of $\Qbeta$ on $\Sigma_N$ we have $\norm {\Phi^N u}_{H^N} = \norm {\Phi^N u}_{H}$. Hence the claim will follow once we show that $\Phi^N u \to u$ in $H$. For the  latter we use the following abstract result,  whose proof  can be found, e.g. in  \cite[lemma 1.3]{MR1619139}. 
\begin{lemma}\label{condconv}
\textit{Let $(\Omega, \mathcal D , \mu )$ be a measure space and $(\mathcal F_n)_{n \in \N}$ a sequence of $\sigma$-subalgebras of $\mathcal D$. Then $E(f|\mathcal F_n) \to f$ for all $f \in L^p$, $p \in [1,\infty)$ if and only if for all $A \in \mathcal D $ there is a sequence $A_n \in \mathcal F_n $ such that $\mu(A_n \Delta A) \to 0$ for $n \to \infty$.}
\end{lemma}
In order to apply this lemma to the given case $(\G,  \mathcal B (\G),\Qbeta)$, where $\mathcal B (\G)$ denotes the  Borel $\sigma$-algebra on $\G$, let $\mathcal F_\Qbeta \subset \mathcal B(\G)$ denote the collection of all Borel sets $F\subset \G$ which can be approximated by elements $F_N \in \mathcal F^N$ with respect to $\Qbeta$ in the sense above. Note that $\mathcal F_\Qbeta $ is again a $\sigma$-algebra, cf. the appendix in \cite{MR1619139}. Let $\mathcal M $ denote the system of finitely based  open cylinder sets in $\G$ of the form $M = \{ g \in \G| g_{t_i} \in O_i, i= 1, \dots, L \} $ where $t_i \in \eI $ and $O_i \subset \eI $ open.  From  the almost sure right continuity of $g$ and the fact that  $g_.$ is continuous at $t_1, \dots, t_L$ for $\Qbeta$-almost all $g$ it follows that  
$M_N := \{ g \in \G| g_{(\lceil  t_i\cdot N \rceil /N )  } \in O_i, i= 1, \dots, L \} \in \mathcal F^N$ is an approximation of $M$ in the sense above.  Since $\mathcal M$ generates $\mathcal B(\G)$ we obtain $\mathcal B (\G) \subset \mathcal F_\Qbeta$ such that the assertion holds, due to  lemma \ref{condconv}.\bbox
\end{proof}

\begin{remark} It is much simpler to prove proposition \ref{hconvergence} for  a dyadic subsequence ${N'}=2^m$, $m \in \N$ when the sequence  $\norm {\Phi^{N'} u}_{H^{N'}}$ is nondecreasing and bounded, because $\Phi^{N'}$ is a projection operator in $H$ with increasing range im$(\Phi^{{N'}})$ as   ${N'}$ grows. Hence, $\norm {\Phi^{N'} u}_{H^{N'}}$ is Cauchy and thus \[ \norm {\Phi^{N'} u -\Phi^{M'} u}_H^2 =   \norm {\Phi^{N'} u}_H^2 -\norm{\Phi^{M'} u}_H^2 \to 0 \quad \mbox{for}\, M', {N'}  \to \infty,\]
i.e.\ the sequence $\Phi^{N'} u$ converges to some $v \in H$. Since obviously $\Phi^N u \to u$ weakly in $H$ it follows that $u=v$ such that the claim is obtained from  $|\norm {\Phi^{N'} u }_H- \norm {u }_H| \leq\norm {\Phi^{N'} u -u }_H$.
\end{remark}

To simplify notation for $f \in L^2(\eI,dx)$ denote the functional  $  g \to \langle f, g\rangle_{L^2(\eI)}$ on $\G$ by $l_f$. We introduce the set $K$ of polynomials defined by
\[
K=\left\{ u \in C(\G) \, | \, u(g)= \prod_{i=1}^n l_{f_i}^{k_i}(g), \, k_i \in \N,\, f_i \in C([0,1]) \right\}.
\]

\begin{corollary} \label{simplifycorr}
\textit{  For  a polynomial $ u \in K$ with  $u(g)= \prod_{i=1}^n l_{f_i}^{k_i}(g)$   let $u_N:= \prod_{i=1}^n  \bigl(\Phi^N( l_{f_i})\bigr)  ^{k_i} \in H^N$,
then $u_N \to u$ strongly.} 
\end{corollary}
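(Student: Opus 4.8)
The plan is to reduce the statement for a general monomial $u = \prod_{i=1}^n l_{f_i}^{k_i}$ to the single-factor case and then to a product-of-strongly-convergent-sequences argument. First I would recall from Proposition~\ref{hconvergence} (and the identification of $q_N$ with the finite-dimensional marginals of $\Qbeta$) that for each fixed $f \in C([0,1])$ the sequence $\Phi^N(l_f) \in H^N$ converges strongly to $l_f \in H$: indeed $\Phi^N(l_f)$, viewed inside $H$, equals $\mathbb E(l_f \mid \mathcal F^N)$, which converges to $l_f$ in $H$ by Lemma~\ref{condconv}, and $\|\Phi^N(l_f)\|_{H^N} = \|\Phi^N(l_f)\|_H$, so the defining condition for strong convergence (with the approximating sequence $\tilde u_N \equiv l_f$) holds. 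So the single-factor case is essentially Proposition~\ref{hconvergence}.

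Next I would pass from $l_f$ to its powers and products. The natural approximating sequence in $H$ for $u_N = \prod_i (\Phi^N l_{f_i})^{k_i}$ is $u$ itself, so I must check $\lim_N \limsup_M \|\Phi^M u - u_M\|_{H^M} = 0$, i.e.\ that $u_M \to u$ in the appropriate sense. Since $\G$ is compact in $d_{L^2}$, every $l_f$ with $f \in C([0,1])$ is a bounded continuous function on $\G$, so $u$ and all the $\Phi^N l_{f_i}$ are uniformly bounded by a constant depending only on $\sup_i \|f_i\|_{L^2}$ and the $k_i$. The key elementary fact is that a product of uniformly bounded sequences that each converge strongly (in the Kuwae--Shioya sense) converges strongly to the product of the limits; this follows from the telescoping identity
\[
\prod_i a_i^{(M)} - \prod_i b_i^{(M)} = \sum_j \Big(\prod_{i<j} b_i^{(M)}\Big)\big(a_j^{(M)} - b_j^{(M)}\big)\Big(\prod_{i>j} a_i^{(M)}\Big),
\]
applied in $L^2(\Sigma_M, q_M)$ with $a_i^{(M)} = \Phi^M l_{f_i}$ and $b_i^{(M)}$ a suitable strong approximant, together with the fact that multiplication by a uniformly bounded function preserves the $L^2$ estimates. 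Concretely, because $\Phi^M u$ is also a conditional expectation of $u$ restricted to finitely many coordinates, one has $\Phi^M u = \mathbb E(u \mid \mathcal F^M)$ inside $H$ and hence $\Phi^M u \to u$ in $H$, so it suffices to control $\|\Phi^M u - u_M\|_{H^M}$; expand the difference $\prod_i (\Phi^M l_{f_i})^{k_i} - \Phi^M(\prod_i l_{f_i}^{k_i})$ and bound each telescoped term by $\|\Phi^M l_{f_i} - \text{(the $M$-th level approximant of }l_{f_i})\|_{H^M}$ times uniform bounds, using $\|\Phi^M l_{f} - l_f\|_{H} = \|\Phi^M l_f\|_{\text{stuff}}$-type estimates. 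The errors then tend to $0$ as $M \to \infty$, uniformly, so the $\lim_N \limsup_M$ vanishes with the trivial choice $\tilde u_N = u$.

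The main obstacle is the non-commutativity between taking products and conditional expectations: $\Phi^N(l_f l_h) \neq (\Phi^N l_f)(\Phi^N l_h)$ in general, so one cannot simply say ``$\Phi^N$ of a product is the product of the $\Phi^N$.'' The argument must instead show that the \emph{discrepancy} $\Phi^N(\prod_i l_{f_i}^{k_i}) - \prod_i (\Phi^N l_{f_i})^{k_i}$ tends to $0$ in $H^N$. This is where uniform boundedness of the functionals $l_f$ on the compact space $\G$ is crucial: it lets one write the discrepancy as a sum of terms each containing a factor of the form $\Phi^N(l_{f_j} \cdot v) - (\Phi^N l_{f_j}) \cdot (\Phi^N v)$ for bounded $v$, and such terms go to $0$ because both $\Phi^N(l_{f_j} v)$ and $(\Phi^N l_{f_j})(\Phi^N v)$ converge to $l_{f_j} v$ in $H$ (the first directly by Lemma~\ref{condconv}, the second by the product-of-strong-limits fact just established by induction on the number of factors). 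Assembling these estimates, and invoking that $K$ consists of finite linear combinations of such monomials so that strong convergence is preserved under finite sums, completes the proof.
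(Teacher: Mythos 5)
Your proof is correct and follows essentially the same route as the paper: both rest on the $L^2(\G,\Qbeta)$-convergence of the conditional expectations $\Phi^M v\to v$ (Proposition \ref{hconvergence} via Lemma \ref{condconv}) combined with the uniform boundedness of the factors $l_{f_i}$ and of their conditional expectations, which upgrades factorwise convergence to convergence of the products. The only cosmetic difference is your choice of the constant approximating sequence $\tilde u_N \equiv u$ together with a telescoping estimate, where the paper instead takes $\tilde u_N=\prod_i\bigl(\Phi^N(l_{f_i})\bigr)^{k_i}$ and argues via $L^p$-convergence of each factor; the substance is unchanged.
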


\begin{proof}
Let $\tilde u^N  :=\prod_{i=1}^n  \bigl(\Phi^N( l_{f_i})\bigr)  ^{k_i} \in H$ be the respective product of conditional expectations, where as above $\Phi^N$ also denotes the projection operator  on $H=L^2(\G,\Qbeta)$. Since each of the factors  $\Phi^N( l_{f_i})\in H $   is uniformly bounded and converges strongly to   $l_{f_i}$ in $L^2(\G,\Qbeta)$, the convergence also holds true in any $L^p(\G,\Qbeta)$ with $p >0$. This implies  $\tilde u^N \to u$ in $H$. Furthermore,
 \begin{align}
\lim_N \lim_M  \norm {\Phi^M \tilde u_N -u_M}_{H^M} & = \lim_N \lim_M  \norm {\Phi^M \left(\prod_{i=1}^n  \bigl(\Phi^N( l_{f_i})\bigr)  ^{k_i}\right) -\prod_{i=1}^n  \bigl(\Phi^M( l_{f_i})\bigr)  ^{k_i}}_{H}  \nonumber \\
 & = \lim_N  \norm { \prod_{i=1}^n  \bigl(\Phi^N( l_{f_i})\bigr)  ^{k_i}  -\prod_{i=1}^n    l ^{k_i}_{f_i}  }_{H} =0. \tag*{$\Box$}
 \end{align}
\end{proof}

\begin{proposition}[Mosco II'] \textit{There is a core $K \subset \DE$ such that for all $u \in K$ there is a sequence $u_N \in \D(\E^N)$ converging strongly to $u \in H$ and   $N\cdot \E^N(u_N, u_N) \to 
\E(u,u)$. }
\end{proposition}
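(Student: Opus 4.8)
The plan is to take as core $K$ the algebra of polynomials $u(g)=\prod_{i=1}^n l_{f_i}^{k_i}(g)$ with $f_i\in C(\eI)$ introduced above, and to verify that $K$ is a core for $\E$ and that for each $u\in K$ the sequence $u_N:=\prod_{i=1}^n(\Phi^N l_{f_i})^{k_i}$ from Corollary \ref{simplifycorr} realizes the energy convergence. First I would check that $K\subset\D(\E)$ is a core. Each $l_f$ with $f\in C(\eI)$ lies in $\Cyl^1(\G)$ (or is an $H$-limit of such with energies converging, since $f\in L^2$), and $K$ is an algebra of bounded-gradient functions stable under $C^1$ composition, dense in $C(\G)$ by Stone--Weierstrass, hence dense in $H$; standard truncation/chain-rule arguments promote this to $\E_1$-density, so $K$ is a core. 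Strong convergence $u_N\to u$ in the Kuwae--Shioya sense is exactly Corollary \ref{simplifycorr}, so nothing remains there.

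The heart of the matter is the energy identity $N\cdot\E^N(u_N,u_N)\to\E(u,u)$. I would compute both sides explicitly. On the limit side, for $u=\prod_i l_{f_i}^{k_i}$ one has $\nabla u_{|g}=\sum_i k_i l_{f_i}^{k_i-1}(g)\,\bigl(\prod_{j\ne i}l_{f_j}^{k_j}(g)\bigr)f_i$ as an element of $L^2(\eI,dx)$, so $\E(u,u)=\int_\G\bigl\|\sum_i k_i(\cdots)f_i\bigr\|_{L^2(\eI)}^2\,\Qbeta(dg)$, a finite sum of terms $\langle f_i,f_j\rangle_{L^2}$ weighted by polynomials in the $l_{f_\bullet}(g)$. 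On the discrete side, $\Phi^N l_f$ is a function of $x=(x^1,\dots,x^{N-1})\in\Sigma_N$, and since $q_N$ is the finite-dimensional distribution of $\Qbeta$ one can write $\Phi^N l_f(x)=\mathbb E_{\Qbeta}(l_f(g)\mid g_{i/N}=x^i)$; the point is that conditionally on the values at the grid points the quantile function $g$ is a ``Dirichlet bridge'', and $l_f(g)=\int_0^1 f(g(t))\,dt$ splits as $\sum_{i=0}^{N-1}\int_{i/N}^{(i+1)/N}f(g(t))\,dt$. Differentiating $\Phi^N l_f$ in $x^i$ and using the structure of the bridge (or, more crudely, a direct Taylor expansion $\Phi^N l_f(x)\approx\frac1N\sum_i \tilde f(x^i)+O(1/N^2)$ with controlled gradient) I would show $\partial_{x^i}\Phi^N l_f(x)=\tfrac1N f(x^i)+o(1/N)$ uniformly, hence $|\nabla(\Phi^N l_f)|^2\approx\frac1{N^2}\sum_i f(x^i)^2$ and the cross terms assemble, after multiplying by $N$, into a Riemann-sum approximation of $\int_\G\|\cdots\|_{L^2(\eI)}^2\Qbeta(dg)$; integrating against $q_N$ and passing to the limit via the already-established convergence $\Phi^N l_{f_i}\to l_{f_i}$ in every $L^p(\Qbeta)$ then gives $N\cdot\E^N(u_N,u_N)\to\E(u,u)$.

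The main obstacle, and where the real work lies, is controlling the gradient of the conditional expectation $\Phi^N l_f$ on $\Sigma_N$: one must show that conditioning the Dirichlet/quantile process on its grid values and then perturbing a single grid value $x^i$ produces, to leading order in $1/N$, exactly the contribution $\tfrac1N f(x^i)$ with error terms that are uniformly $o(1/N)$ after summation and that vanish in $L^1(q_N)$. This requires understanding the regularity of the Dirichlet bridge and, near the boundary of $\Sigma_N$ where gaps $x^{i+1}-x^i$ can be small, quantitative estimates that survive integration against the singular density of $q_N$ — it is precisely here that the $\tfrac\beta N-1$ drift and the Bessel-type degeneracy of the model make the analysis delicate. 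A clean way to organize this is to first prove the identity for $f\in C^1(\eI)$ by explicit integration by parts for $q_N$ (the Neumann integration-by-parts formula quoted in the first Remark), obtaining an exact expression for $\nabla\Phi^N l_f$, and then pass to general $f\in C(\eI)$ by density, using uniform $L^2(q_N)$ bounds on the gradients; the compatibility of $q_N$ with $\Qbeta$ makes these uniform bounds tractable and lets the Riemann-sum limit go through.
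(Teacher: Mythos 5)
Your choice of core and of the approximating sequence $u_N=\prod_i(\Phi^N l_{f_i})^{k_i}$ is the same as the paper's, and the strong convergence $u_N\to u$ via corollary \ref{simplifycorr} is fine; the core property of $K$ is asserted at about the same level of detail as in the paper. The genuine gap is in the energy identity, where you silently switch the meaning of $l_f$. Your limit-side computation (correctly) uses the \emph{linear} functional $l_f(g)=\langle f,g\rangle_{L^2(\eI)}$, whose gradient is $f_i$ itself; but on the discrete side you treat $l_f(g)$ as the nonlinear functional $\int_0^1 f(g(t))\,dt$, condition a ``Dirichlet bridge'', and predict $\partial_{x^i}\Phi^N l_f(x)\approx \tfrac1N f(x^i)+o(1/N)$. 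Under the correct definition this is not what happens: since $l_f$ is linear, lemma \ref{condexp} (the conditional expectation of the path $g$ given its grid values is the piecewise linear interpolation $g_X$) gives the exact identity $\Phi^N(l_f)(X)=\langle f,g_X\rangle_{L^2}$, and a one-line differentiation yields $\bigl(\nabla \Phi^N(l_f)(X)\bigr)^i=\tfrac1N\,(\eta^N\ast f)(i/N)$, a mollification of $f$ evaluated at the \emph{deterministic} grid point $i/N$, independent of $x^i$. If your predicted gradient $\tfrac1N f(x^i)$ were right, the rescaled discrete energies would converge to $\int_\G\bigl\|\sum_i c_i(g)\,f_i(g(\cdot))\bigr\|^2_{L^2(\eI)}\Qbeta(dg)$, which is not $\E(u,u)$ for polynomials in the linear functionals $l_{f_i}$; so the two halves of your argument are computed under inconsistent definitions and do not match up.

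As a consequence, the step you yourself flag as ``where the real work lies'' --- regularity of the conditioned bridge, uniform $o(1/N)$ control of $\nabla\Phi^N l_f$, and boundary estimates against the singular density of $q_N$ --- is both unproven and unnecessary: it is an artifact of the misidentification of $l_f$. (Your proposed fallback, integration by parts for $q_N$, produces the divergence of vector fields, not the gradient of a conditional expectation, and the density argument in $f$ is not equipped with the uniform energy bounds it would need.) The paper's route avoids all of this: linearity of $l_f$ plus lemma \ref{condexp} give the closed formula for $\nabla\Phi^N l_f$ above, and then $N\cdot\E^N(u_N,u_N)\to\E(u,u)$ follows at once by dominated convergence in $L^2(\G\times\eI,\Qbeta\otimes dx)$, using $\eta^N\ast f_i\to f_i$ and $\Phi^N l_{f_i}\to l_{f_i}$ in every $L^p(\Qbeta)$. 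To repair your proof, fix $l_f(g)=\langle f,g\rangle_{L^2}$ throughout and replace the bridge analysis by this exact computation; if instead you insist on test functions of the form $g\mapsto\int_0^1 f(g(t))\,dt$, you would first have to show they form a core and recompute both sides consistently, which is a different and substantially harder route than the statement requires.
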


\begin{proof} It follows from the chain rule for the $L^2$-gradient operator $\nabla$ that the linear span of polynomials of the form  $u(g)= \prod_{i=1}^n l_{f_i}^{k_i}(g)$  with $ k_i \in \N$, $f_i \in C([0,1])$, $k_i \in \N$, is a core of $\DE$. Hence it suffices to prove the claim for such $u$. Let $u_N:= \prod_{i=1}^n  \bigl(\Phi^N( l_{f_i})\bigr)  ^{k_i} \in H^N$ as above then the strong convergence of $u^N $ to $u$ is assured by corollary \ref{simplifycorr}. 
From lemma \ref{condexp} below we obtain that  $\Phi^N( l_{f})(X)= \langle f,g_X\rangle$. In particular
 \[\bigl(\nabla \Phi^N( l_{f})(X)\bigr)^i= \frac 1 N \cdot \bigl(\eta^N*f\bigr)(\frac i N),\] 
where $\eta^N$ denotes the convolution kernel  $t \to \eta^N(t) =  N \cdot (1-\min(1,| N\cdot t|))$.
By this the convergence of $N \cdot \E_N (u^N, u^N) $ to $\E(u,u)$   follows easily from Lebesgue's dominated convergence theorem in $L^2(\G\times \eI, \Qbeta\otimes dx)$.  \bbox
\end{proof}

\begin{remark}\label{rem_Mosco2}
For later use we observe that for $u$ and $u_N$ as above and for $\Qbeta$-a.e.\ $g$ we have
\[
\norm{N \iota^N(\nabla u_N(g(1/N),\ldots, g((N-1)/N)))-\nabla u_{|g}}_{L^2(0,1)}\rightarrow 0 \quad \text{as $N\rightarrow \infty$},
\]
with $\iota^N: \mathbb{R}^{N-1} \rightarrow D([0,1),\mathbb{R})$ defined as above.
\end{remark}

\begin{lemma}\label{condexp} \textit{
For $X \in \Sigma_N$ define  $g_{X} \in \G$ by 
\[g_X(t) = x_i +(N\cdot t-i)(x_{i+1}-x_i) \quad \mbox{ if } t \in [\frac i N, \frac{i+1}N), \quad i=0, \dots, N-1,\] 
then  
\[
\mathbb E(g|\mathcal F_N)(X)=g_{X}.\]
}
\end{lemma}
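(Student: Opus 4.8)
The plan is to compute both sides of the claimed identity $\mathbb E(g\,|\,\mathcal F_N)(X) = g_X$ pointwise in $t \in [0,1)$ and check they agree. Recall that $g \in \G$ is the random (right-continuous, nondecreasing) quantile function associated with $\nu^N_\cdot$; here we are evaluating the conditional expectation of the $\G$-valued random variable $g$, distributed according to the finite-dimensional marginal structure inherited from $\Qbeta$, given the $\sigma$-algebra $\mathcal F_N$ generated by the values $g(i/N)$, $i = 1,\dots,N-1$. The point of departure is the observation already used in Proposition \ref{hconvergence}: the image measure of $\Qbeta$ under the map $g \mapsto (g(1/N),\dots,g((N-1)/N))$ is exactly $q_N$, and more is true — conditionally on $g(i/N) = x_i$ for all $i$, the restricted pieces $g|_{[i/N,(i+1)/N)}$ are, up to affine rescaling of the domain, independent copies of the quantile function of a rescaled Dirichlet/Gamma bridge, with the key structural property that each conditional law is \emph{exchangeable} in a way that forces the conditional mean to be the linear interpolant.

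Concretely, I would argue as follows. Fix $i \in \{0,\dots,N-1\}$ and $t \in [i/N,(i+1)/N)$. Conditionally on $\mathcal F_N$, i.e.\ on $g(j/N) = x_j$ for all $j$ (with $x_0 = 0$, $x_N = 1$ by convention), the function $s \mapsto g(s)$ on $[i/N,(i+1)/N)$ is a nondecreasing path from $x_i$ to $x_{i+1}$ whose law is that of the quantile function of the corresponding normalized Gamma subordinator increment — and this conditional law is invariant under the reflection $s \mapsto i/N + (i+1)/N - s$ composed with the reflection of values $y \mapsto x_i + x_{i+1} - y$. This symmetry (which comes from the time-reversal invariance of the Dirichlet process bridge, equivalently from the symmetry of the density $\prod (x^k - x^{k-1})^{\beta/N - 1}$ under relabeling) yields
\[
\mathbb E\bigl(g(t) + g(i/N + (i+1)/N - t)\,\big|\,\mathcal F_N\bigr) = x_i + x_{i+1},
\]
and combined with the fact that $t \mapsto \mathbb E(g(t)\,|\,\mathcal F_N)$ is affine on $[i/N,(i+1)/N)$ — which itself follows from the self-similar / Lévy structure of the Gamma bridge, so that the conditional mean of the increment of $g$ over any subinterval is proportional to the length of that subinterval — one gets $\mathbb E(g(t)\,|\,\mathcal F_N) = x_i + (Nt - i)(x_{i+1} - x_i)$, which is precisely $g_X(t)$.

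The main obstacle is making rigorous the claim that $t \mapsto \mathbb E(g(t)\,|\,\mathcal F_N)$ is affine on each dyadic-type subinterval $[i/N,(i+1)/N)$; everything else is bookkeeping. The cleanest route is to use the explicit description of $\Qbeta$ from \cite{vrs07}: $g$ under $\Qbeta$ is the quantile function of the Dirichlet process $D^\beta$ on $\eI$, and the increments of $D^\beta$ over disjoint intervals are independent with Dirichlet-distributed normalized masses. Conditioning on the masses assigned to the $N$ intervals $[(k-1)/N, k/N)$ is exactly conditioning on $\mathcal F_N$ after passing through the quantile transform, and within the $k$-th block the conditional process is a rescaled copy whose mean increment over a sub-block of relative length $\lambda$ is $\lambda$ times the total block increment $x_k - x_{k-1}$ — this is just the statement that a Dirichlet/Gamma bridge has mean equal to the straight line, which one can cite or verify by a one-line computation with Beta moments. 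Once linearity and the endpoint values $x_i, x_{i+1}$ are in hand, the identity $\mathbb E(g\,|\,\mathcal F_N)(X) = g_X$ follows immediately, and one should also note that $g_X = \iota^N(X)$ modulo the affine-interpolation-versus-step-function distinction, consistent with the use of $g^N_t = \iota(X^N_{N t})$ elsewhere.
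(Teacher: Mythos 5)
Your proposal is correct and in substance the same as the paper's proof, which simply invokes the explicit finite-dimensional distributions of the Dirichlet process under $\Qbeta$: conditionally on the lattice values $g(i/N)=x_i$, the variable $g(t)$ for $t$ in a block is a rescaled Beta variable whose mean is the linear interpolant — exactly your ``cleanest route'' via Beta moments. The reflection-symmetry detour is unnecessary (and note two harmless imprecisions: under $\Qbeta$ the path $g$ is itself distributed as the Dirichlet process, and its normalized increments are jointly Dirichlet but not independent), but the core computation is the one the paper has in mind.
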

\begin{proof}
The statement is a simple consequence of the explicit formula for the finite dimensional distributions of the Dirichlet process, cf. [vRS07]. \bbox
\end{proof}

For the verification of Mosco I we exploit that the respective integration by parts formulas of $\E^N$ and $\E$ converge. In case of a fixed state space a similar approach is discussed in \cite{MR2186217}. 
\smallskip

Let   $T^N:= \{ f: \Sigma_N \to \R^{N-1}\}$ be equipped with the norm 
\[ \norm {f}_{T^N}^2 : = \frac{1}{N} \int _{\Sigma_N} \norm{f(x)}_{R^{N-1}}^2 q_N (dx),  \]
then the corresponding integration by parts formula for $q_N$ on $\Sigma_N$  reads 
\begin{equation}
 \langle \nabla u , \xi\rangle_{T^{N}} =  - \frac 1 N \langle u,\div_{q_N} \xi\rangle_{H^N}. \label{enddivibp}
\end{equation}

To state the corresponding  formula for $\E$ we introduce the Hilbert space of vector fields on $\G$ by
\[T =L^2(\G\times \eI, \Qbeta\otimes dx),\] 
with dense subset $\Theta \subset T$
\[\Theta = \mbox{span}\{ \zeta  \in T\,|\, \zeta(g,t)=w(g)\cdot \varphi(g(t)),   w\in K,  \varphi\in\C^\infty(\eI):   \varphi(0)=\varphi(1)=0 \}.\]
 The $L^2$-derivative operator $\nabla$ defines a map 
\[ \nabla: \Cyl^1(\G) \to T \] 
which by \cite[proposition 7.3]{vrs07}, cf. \cite{ryz}, satisfies the following integration by parts formula, .
\begin{equation}
 \langle \nabla u , \zeta \rangle _T = -\langle u, \div_\Qbeta{\zeta}\rangle_H, \quad u \in \Cyl^1(\G), \zeta \in \Theta ,
\label{divibp} 
\end{equation}
where, for $\zeta (g,t) = w(g)\cdot \varphi(g(t))$,
\[ 
\div_\Qbeta \zeta (g) =  w(g)\cdot  V^\beta_\varphi(g) +    \langle \nabla w(g)(.) ,\varphi(g(.))\rangle_{L^2(dx)}\]
with 
$$V^\beta_\varphi(g):=V^0_\varphi(g)+\beta\int_0^1\varphi'(g(x))dx-\frac{\varphi'(0)+\varphi'(1)}{2}$$
and 
\[\label{drift-0}
V^0_\varphi(g):=\sum_{a\in J_g}\left[
\frac{\varphi'(g(a+))+\varphi'(g(a-))}2-\frac{\delta(\varphi\circ
g)}{\delta g}(a)\right].
\]
Here $J_g \subset [0,1]$ denotes the set of jump locations of $g$ and
\begin{equation*}\label{delta-def}
\frac{\delta (\varphi\circ g)}{\delta
g}\left(a\right):=\frac{\varphi\left(g(a+)\right) -
\varphi\left(g(a-)\right)}{g(a+) - g(a-)} \ .
\end{equation*}
By formula (\ref{divibp}) one can extend $\nabla$  to a closed operator on $D(\E)$ such that $\E(u,u)= \norm{\nabla u}_T^2$. The Markov uniqueness of $\E$ now implies the converse which is a characterization of $\DE$ via (\ref{divibp}).
\begin{lemma}[Meyers-Serrin property]
\label{myersserrin}\textit{
Assume Markov-uniqueness holds for $\E$, then 
\begin{equation} (\E(u,u))^{1/2} = \sup_{\zeta \in \Theta }
 \frac{\langle u, \div_\Qbeta{\zeta}\rangle_H}{\norm{\zeta}_T}. \label{mseq}\end{equation}}
\end{lemma}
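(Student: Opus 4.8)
The inequality ``$\geq$'' in \eqref{mseq} is the easy direction: it is a direct consequence of the integration by parts formula \eqref{divibp} together with Cauchy--Schwarz, valid for every $u\in\DE$ without any uniqueness assumption. Indeed, for $u\in\Cyl^1(\G)$ and $\zeta\in\Theta$ one has $\langle u,\div_\Qbeta\zeta\rangle_H = -\langle\nabla u,\zeta\rangle_T \leq \norm{\nabla u}_T\norm{\zeta}_T = (\E(u,u))^{1/2}\norm{\zeta}_T$, and since $\nabla$ extends to a closed operator on $\DE$ with $\E(u,u)=\norm{\nabla u}_T^2$, the same bound passes to all of $\DE$ by approximation. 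So the content of the lemma is the reverse inequality, or equivalently: if $u\in H$ is such that the right-hand side of \eqref{mseq} is finite, then $u\in\DE$ and $\E(u,u)$ is bounded by (the square of) that supremum.

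\textbf{Main argument.} The plan is to define, for $u\in H$, the quantity
\[
\widetilde\E(u,u):=\Bigl(\sup_{\zeta\in\Theta}\frac{\langle u,\div_\Qbeta\zeta\rangle_H}{\norm\zeta_T}\Bigr)^2,
\]
with domain $D(\widetilde\E)=\{u\in H:\widetilde\E(u,u)<\infty\}$, and to check that $(\widetilde\E,D(\widetilde\E))$ is a Dirichlet form on $L^2(\G,\Qbeta)$ extending $(\E,\DE)$. Finiteness of the supremum means precisely that the linear functional $\zeta\mapsto\langle u,\div_\Qbeta\zeta\rangle_H$ is bounded on the dense subspace $\Theta\subset T$, hence represented by a unique vector $G_u\in T$ with $\langle G_u,\zeta\rangle_T=-\langle u,\div_\Qbeta\zeta\rangle_H$ and $\norm{G_u}_T^2=\widetilde\E(u,u)$; this is the distributional/weak $L^2$-gradient of $u$, and $\widetilde\E$ is quadratic with $\widetilde\E(u,u)=\norm{G_u}_T^2$. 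Lower semicontinuity (hence closedness) of $\widetilde\E$ is automatic since it is a supremum of continuous quadratic functionals $u\mapsto\langle u,\div_\Qbeta\zeta\rangle_H^2/\norm\zeta_T^2$. For the Markov (contraction) property one applies the normal contraction $u\mapsto\phi(u)$ with $\phi$ $1$-Lipschitz, $\phi(0)=0$: using the localized structure $\zeta(g,t)=w(g)\varphi(g(t))$ of the test fields and the explicit form of $\div_\Qbeta$, together with $\nabla(\phi\circ u)=\phi'(u)\,\nabla u$ at the level of the gradient $G_u$, one verifies $\widetilde\E(\phi\circ u,\phi\circ u)\leq\widetilde\E(u,u)$; the cleanest route is to note that $G_{\phi\circ u}=\phi'(u)G_u$ pointwise, which follows by testing against $\zeta\in\Theta$ and a chain-rule computation. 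Thus $(\widetilde\E,D(\widetilde\E))$ is a Dirichlet form on $L^2(\G,\Qbeta)$, and by the ``$\geq$'' direction above it extends $\E$. Since $\E$ is assumed \emph{Markov-unique} — i.e.\ maximal among Dirichlet forms on $L^2(\G,\Qbeta)$ extending $(\E,\Cyl^1(\G))$ — we conclude $\widetilde\E=\E$, which is exactly \eqref{mseq}.

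\textbf{Main obstacle.} The delicate point is verifying the Markovian property of $\widetilde\E$, i.e.\ that the ``weak gradient defined by duality against $\Theta$'' genuinely obeys the chain rule under normal contractions. The difficulty is that $\Theta$ consists of rather special vector fields (products $w(g)\varphi(g(t))$ with $\varphi$ vanishing at the endpoints) and the divergence $\div_\Qbeta$ carries the singular drift terms $V^\beta_\varphi$ coming from the jump set $J_g$ of $g$; one must make sure these singular contributions are handled correctly when $u$ is replaced by $\phi\circ u$, in particular that no regularity of $u$ beyond membership in $D(\widetilde\E)$ is implicitly used. A safe way around this is to first establish that $D(\widetilde\E)\cap L^\infty$ is an algebra on which the gradient satisfies the Leibniz rule (again by duality against $\Theta$ and the product structure of $\div_\Qbeta$), then deduce the chain rule for polynomials and pass to general $\phi$ by uniform approximation, exactly as in the classical Meyers--Serrin argument; the fixed-state-space analogue in \cite{MR2186217} can be used as a template. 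Everything else — the representation of bounded functionals, quadraticity, lower semicontinuity, and the final appeal to Markov uniqueness — is routine.
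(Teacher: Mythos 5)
Your proposal takes essentially the same route as the paper's own proof: denote the right-hand side of \eqref{mseq} by $\hat\E$, observe that it defines a closed Markovian (Dirichlet) extension of $\E$, and conclude $\hat\E=\E$ from the assumed maximality (Markov uniqueness). The paper compresses the verification that $\hat\E$ is indeed a Markovian extension into a citation of the standard argument in \cite{MR1734956}, which is exactly the step you flag as the main obstacle, so your write-up is, if anything, more explicit than the published one.
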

\begin{proof}
We repeat the standard argument, cf. \cite{MR1734956}. Denoting the r.h.s. of (\ref{mseq}) by $(\hat \E(u,u))^{1/2}$ one obtains that $\hat \E$ is a Markovian extension of $\E$. Since  $\E$ is assumed maximal in the class of Markovian forms it follows $\E = \hat \E$.  \bbox
\end{proof}

The convergence of (\ref{enddivibp}) to (\ref{divibp}) is established by the following lemma  whose prove is given below.
 
\begin{lemma} \label{divapprox} \textit{For  $\zeta\in \Theta$  there exists a sequence of vector fields 
 $\zeta_N : \Sigma_N \to \R^{N-1}$ such that $\div_{q_N}\zeta^N \in H^N $ converges strongly to $\div_\Qbeta{\zeta} $ in $H$ and such that $\norm {\zeta^N}_{T^N} \to \norm{\zeta}_T$ for $N \to \infty$.}
\end{lemma}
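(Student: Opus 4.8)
The plan is to construct $\zeta^N$ by discretizing $\zeta$ along the grid $\{i/N\}$ and to check that $\div_{q_N}$ applied to this discretization converges to the continuum divergence $\div_\Qbeta \zeta$. Since $\Theta$ is spanned by vector fields of the form $\zeta(g,t)=w(g)\,\varphi(g(t))$ with $w\in K$ and $\varphi\in C^\infty(\eI)$, $\varphi(0)=\varphi(1)=0$, by bilinearity it suffices to treat a single such $\zeta$. Using the identification $g\leftrightarrow X=(g(1/N),\dots,g((N-1)/N))$ and $g_X=\iota^N(X)$ (lemma \ref{condexp}), I would set
\[
\zeta^N(X)^i := \Phi^N(w)(X)\cdot\varphi\bigl(x^i\bigr),\qquad i=1,\dots,N-1,
\]
i.e.\ the value of $w$ is replaced by its conditional expectation and the test function $\varphi$ is evaluated at the particle positions. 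This is the natural finite-dimensional shadow of $\zeta$ under $\Phi^N$.

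First I would verify the norm convergence $\norm{\zeta^N}_{T^N}\to\norm{\zeta}_T$. By definition $\norm{\zeta^N}_{T^N}^2=\frac1N\int_{\Sigma_N}\sum_{i=1}^{N-1}\Phi^N(w)(x)^2\varphi(x^i)^2\,q_N(dx)$, which, since $q_N$ is the image of $\Qbeta$ under $X\mapsto(g(1/N),\dots)$, equals $\int_\G \Phi^N(w)(g)^2\cdot\frac1N\sum_{i=1}^{N-1}\varphi(g(i/N))^2\,\Qbeta(dg)$. The Riemann sum $\frac1N\sum_i\varphi(g(i/N))^2$ converges to $\int_0^1\varphi(g(t))^2\,dt$ for $\Qbeta$-a.e.\ $g$ (a.s.\ right-continuity of $g$, with continuity at the dyadic-free grid points handled as in the proof of Proposition \ref{hconvergence}), and $\Phi^N(w)\to w$ in every $L^p(\Qbeta)$ by the argument in Corollary \ref{simplifycorr}; uniform boundedness of $\varphi$ and of $\Phi^N(w)$ then lets dominated convergence finish the job, giving $\norm{\zeta^N}_{T^N}^2\to\int_\G w(g)^2\int_0^1\varphi(g(t))^2\,dt\,\Qbeta(dg)=\norm{\zeta}_T^2$.

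Next, the divergence. I would compute $\div_{q_N}\zeta^N$ from the integration-by-parts identity (\ref{enddivibp}), or equivalently from the explicit logarithmic-derivative formula for $q_N$ implicit in the expression for $L^N$ in the first Remark: the drift of $q_N$ in direction $\partial_i$ is $(\frac\beta N-1)(\frac1{x^i-x^{i-1}}-\frac1{x^{i+1}-x^i})$, so
\[
\div_{q_N}\zeta^N(X)=\sum_{i=1}^{N-1}\partial_i\zeta^N(X)^i+\bigl(\tfrac\beta N-1\bigr)\sum_{i=1}^{N-1}\Bigl(\tfrac1{x^i-x^{i-1}}-\tfrac1{x^{i+1}-x^i}\Bigr)\Phi^N(w)(X)\varphi(x^i),
\]
up to the boundary adjustments coming from $x^0=0$, $x^N=1$. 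The first sum splits into a term with $\partial_i$ hitting $\varphi(x^i)$, namely $\Phi^N(w)(X)\sum_i\varphi'(x^i)$, plus a term with $\partial_i$ hitting $\Phi^N(w)$, which is exactly $\langle \nabla\Phi^N(w)(X),\varphi(g_X(\cdot))\rangle$ in the discrete pairing and converges to $\langle\nabla w(g)(\cdot),\varphi(g(\cdot))\rangle_{L^2(dx)}$ using Remark \ref{rem_Mosco2}. The delicate term is the second sum: summation by parts rewrites $\sum_i(\frac1{x^i-x^{i-1}}-\frac1{x^{i+1}-x^i})h(x^i)=\sum_i\frac{h(x^i)-h(x^{i-1})}{x^i-x^{i-1}}$ (with $h=\Phi^N(w)\varphi$, plus boundary terms), and combining this with the $\Phi^N(w)\sum_i\varphi'(x^i)$ term and the $\frac\beta N$ factor one recovers, in the limit, precisely the three ingredients of $\div_\Qbeta\zeta$: the jump sum $V^0_\varphi(g)$ from the increments of $g$ across its macroscopic gaps, the term $\beta\int_0^1\varphi'(g(x))\,dx$ from the $\frac\beta N\sum_i\frac{\varphi(x^i)-\varphi(x^{i-1})}{x^i-x^{i-1}}$ contribution, and the boundary correction $-\frac{\varphi'(0)+\varphi'(1)}2$ from the $x^0,x^N$ terms, all multiplied by $w(g)$. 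Strong $H$-convergence then follows by dominated convergence once one has $\Qbeta$-a.e.\ convergence together with a uniform $L^2(\Qbeta)$ bound.

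The main obstacle is the last point: controlling the singular sum $(\frac\beta N-1)\sum_i(\frac1{x^i-x^{i-1}}-\frac1{x^{i+1}-x^i})\varphi(x^i)$ uniformly in $N$. Individually the terms $1/(x^i-x^{i-1})$ blow up like $N$ when neighbouring particles are close (which is typical under $q_N$, since the spacings scale like $1/N$), so the convergence to the finite quantity $V^0_\varphi(g)$ is genuinely a near-cancellation: only the macroscopic gaps of $g$, where the spacing stays of order one, survive, while the microscopic spacings must telescope away. Making this rigorous requires either the summation-by-parts rearrangement above, which replaces $\frac1{x^i-x^{i-1}}$ by the bounded difference quotient $\frac{h(x^i)-h(x^{i-1})}{x^i-x^{i-1}}\le\norm{h'}_\infty$ before passing to the limit, plus a careful treatment of the region near the jumps of $g$ (splitting the sum over indices where $x^i-x^{i-1}$ is above vs.\ below a threshold and using the known convergence of the empirical spacing structure of the Dirichlet process to its gap structure), or a direct second-moment estimate for $\div_{q_N}\zeta^N$ under $q_N$ using the explicit product form of $q_N$ and properties of Dirichlet/Gamma distributions. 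I would expect the cleanest route to be the summation-by-parts one, with the a.e.\ convergence of the resulting bounded Riemann-type sums to the gap sum $V^0_\varphi$ and the boundary terms established exactly as in \cite{vrs07}.
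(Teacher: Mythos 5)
Your construction and strategy coincide with the paper's: the same discretized field $\zeta^N(X)^i=w_N(X)\varphi(x^i)$, the same identification of $\norm{\zeta^N}_{T^N}$ via the finite-dimensional distributions of $\Qbeta$ and Riemann sums, the same decomposition of $\div_{q_N}\zeta^N$ into a drift part (the paper's $V^\beta_{N,\varphi}$, which is exactly your Abel-summed expression) plus the gradient term handled by Remark \ref{rem_Mosco2}, and the same lifting $g\mapsto \div_{q_N}\zeta^N(g(t_1),\dots,g(t_{N-1}))$ to reduce strong convergence to convergence in $H$, with the a.s.\ limit identified by Taylor expansion away from, and separate treatment of, the finitely many largest jumps of $g$ --- this is precisely the content of the paper's two concluding lemmas. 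Two details to align with the paper: take $w_N=\prod_i(\Phi^N(l_{f_i}))^{k_i}$ (product of conditional expectations) rather than $\Phi^N(w)$, since Corollary \ref{simplifycorr} and Remark \ref{rem_Mosco2} are stated for that choice; and note that the uniform-in-$N$ bound needed for $L^p$ convergence does not come from the per-term bound $\norm{h'}_\infty$ on difference quotients (that sum has order $N$ terms with an $O(1)$ prefactor), but from pairing them with $\sum_i\varphi'(x^i)$, where second-order Taylor gives summands of size $\norm{\varphi''}_\infty\,(x^{i+1}-x^i)$ which telescope to a constant by monotonicity of $g$.
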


\begin{proposition}[Mosco I]\label{mosco1} \textit{Let $\E$ be Markov-unique and let $u_N \in \D(\E^N)$ converge weakly to   $u \in H$, then 
\[\E(u,u) \leq  \liminf_{N \to \infty} N\cdot \E^N(u_N, u_N).\] }
\end{proposition}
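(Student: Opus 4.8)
The plan is to deduce Mosco~I from the Meyers--Serrin characterization in Lemma~\ref{myersserrin} together with the convergence of the integration by parts formulas supplied by Lemma~\ref{divapprox}. Concretely, fix $\zeta \in \Theta$ and let $\zeta^N : \Sigma_N \to \R^{N-1}$ be the approximating vector fields from Lemma~\ref{divapprox}, so that $\div_{q_N}\zeta^N \to \div_\Qbeta \zeta$ strongly in $H$ and $\norm{\zeta^N}_{T^N} \to \norm{\zeta}_T$. The elementary but crucial observation is that the finite-dimensional integration by parts formula (\ref{enddivibp}) gives, for every $N$,
\[
N\cdot \langle \nabla u_N, \zeta^N\rangle_{T^N} = -\langle u_N, \div_{q_N}\zeta^N\rangle_{H^N},
\]
and by Cauchy--Schwarz on $T^N$ the left-hand side is bounded in absolute value by $\bigl(N\cdot\E^N(u_N,u_N)\bigr)^{1/2}\,\norm{\zeta^N}_{T^N}$. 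Thus
\[
\frac{\langle u_N, \div_{q_N}\zeta^N\rangle_{H^N}}{\norm{\zeta^N}_{T^N}} \;\leq\; \bigl(N\cdot \E^N(u_N,u_N)\bigr)^{1/2}.
\]

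Next I would pass to the limit $N\to\infty$ on the left-hand side. Since $u_N \to u$ weakly in the sense of the varying Hilbert spaces and $\div_{q_N}\zeta^N \to \div_\Qbeta\zeta$ strongly, the pairing $\langle u_N, \div_{q_N}\zeta^N\rangle_{H^N}$ converges to $\langle u, \div_\Qbeta\zeta\rangle_H$ by the definition of weak convergence; the denominators converge to $\norm{\zeta}_T$. Hence, taking $\liminf_N$ on the right,
\[
\frac{\langle u, \div_\Qbeta\zeta\rangle_H}{\norm{\zeta}_T} \;\leq\; \liminf_{N\to\infty}\bigl(N\cdot\E^N(u_N,u_N)\bigr)^{1/2}.
\]
Now take the supremum over all $\zeta \in \Theta$. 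By the Meyers--Serrin property (Lemma~\ref{myersserrin}), which is exactly where Markov-uniqueness of $\E$ enters, the supremum of the left-hand side equals $(\E(u,u))^{1/2}$ (with the convention that it is $+\infty$, forcing the right side to be infinite, if $u \notin \DE$). Squaring yields $\E(u,u) \leq \liminf_N N\cdot\E^N(u_N,u_N)$, which is the claim.

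The main obstacle is entirely contained in Lemma~\ref{divapprox}: one must actually construct the discretized vector fields $\zeta^N$ and verify both the strong convergence of the discrete divergences $\div_{q_N}\zeta^N$ to $\div_\Qbeta\zeta$ and the convergence of norms $\norm{\zeta^N}_{T^N}\to\norm{\zeta}_T$. This is delicate because the divergence $\div_\Qbeta\zeta$ contains the singular drift term $V^\beta_\varphi(g)$ with its sum over the jump set $J_g$ of $g$, and one has to check that the finite-particle drifts in $\div_{q_N}$ (which, via the formula for $L^N$ in the first Remark, involve the reciprocal gap lengths $1/(x^i-x^{i-1})$) reproduce these jump contributions in the limit for $\Qbeta$-a.e.\ $g$; control of the $\log$-derivative singularities and a dominated convergence argument on $\G\times\eI$ will be needed, exactly as in Remark~\ref{rem_Mosco2}. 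Given Lemma~\ref{divapprox}, the rest of the proof of Mosco~I is the short duality argument above; I would present it essentially in the three displayed inequalities, note carefully that the $+\infty$ convention keeps the estimate valid when $u\notin\DE$, and defer the construction of $\zeta^N$ to the proof of Lemma~\ref{divapprox}.
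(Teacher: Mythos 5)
Your proposal is correct and follows essentially the same route as the paper: the same duality argument combining the discrete integration by parts formula (\ref{enddivibp}), Cauchy--Schwarz, the weak--strong pairing convergence for $\langle u_N,\div_{q_N}\zeta^N\rangle_{H^N}$ via Lemma \ref{divapprox}, and finally the Meyers--Serrin characterization (\ref{mseq}) where Markov uniqueness is used. Your added remark on the $+\infty$ convention when $u\notin\DE$ is a harmless clarification of what the paper leaves implicit.
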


\begin{proof}  
Let $u \in H$ and $u_N \in H^N$ converge weakly to $u$.
Let $\zeta \in \Theta$ and $\zeta^N$ be as in lemma $\ref{divapprox}$, then 
\begin{align*}
 \frac{-\langle u, \div_\Qbeta{\zeta}\rangle_H}{\norm{\zeta}_T} & = \lim \frac{-\langle u_N, \div_{q_N}{\zeta_N}\rangle_{H^N}}{\norm{\zeta^N}_{T^N}}\\
& =  \lim N \cdot \frac{\langle \nabla u_N,  {\zeta_N}\rangle_{T^N}}{\norm{\zeta^N}_{T^N}}
\leq \liminf N \cdot \norm{\nabla u_N}_{T^N}
= \liminf \left( N\cdot \E^N(u_N,u_N)\right)^{1/2}, 
\end{align*}
such that, using (\ref{mseq}), \begin{equation}
\left(\E(u,u)\right)^{1/2} = \sup_{\zeta \in \Theta} \frac{-\langle u, \div_\Qbeta{\zeta}\rangle_H}{\norm{\zeta}_T} \leq \liminf \left( N\cdot \E^N(u_N,u_N)\right)^{1/2}. \tag*{$\Box$}
\end{equation}
  
\end{proof}

\noindent
\textit{Proof of lemma \ref{divapprox}.} By linearity it suffices to consider  the case  $\zeta(g,t)=w(g) \cdot \varphi(g(t))$ with $w(g)=\prod_{i=1}^n l_{f_i}^{k_i}(g)$. Choose 
\[
(\zeta^N(x_1, \cdots, x_{N-1}))^{i} := w_N(x_1, \dots, x_{N-1})\cdot  \varphi(x_i)\] 
with $w_N:=\prod_{i=1}^n (\Phi^N(l_{f_i}))^{k_i}$.
Then
\[\div_{q_N} \zeta^N=w_N \cdot V_{N,\varphi}^\beta + \langle \nabla w_N, \vec\varphi \rangle_{\mathbb{R}^{N-1}},\]
 with
\[\vec \varphi(x_1,\ldots,x_{N-1}):=(\varphi(x_1),\ldots,\varphi(x_{N-1}))\]
 and
\[V_{N,\varphi}^\beta(x_1,\ldots,x_{N-1}):= (\frac{\beta}{N}-1) \sum_{i=0}^{N-1} \frac{\varphi(x_{i+1})-\varphi(x_i)}{x_{i+1}-x_i}+\sum_{i=1}^{N-1}\varphi'(x_i).
\]
We recall that for all bounded measurable $u: [0,1]^{N-1}\rightarrow \mathbb{R}$
\begin{align*}
\int_{\Sigma_N} u(x_1,\ldots,x_{N-1})\, q_N(dx)=\int_{\G} u(g(t_1),\ldots, g(t_{N-1})) \, \mathbb{Q}^\beta(dg),
\end{align*}
with $t_i=i/N$, $i=0,\ldots, N$. Using this we get immediately
\begin{align*}
\norm {\zeta^N}_{T^N}^2&=\frac{1}{N} \int_{\Sigma_N} \sum_{i=1}^{N-1} w_N^2(x) \, \varphi(x_i)^2 \, q_N(dx) 
= \int_{\G}   w_N^2(g(t_1),\ldots, g(t_{N-1})) \frac{1}{N} \sum_{i=1}^{N-1} \, \varphi(g(t_i))^2 \, \mathbb{Q}^\beta(dg) \\
&\rightarrow   \int_{\G}  w^2(g)  \int_0^1 \varphi(g(s))^2 \,ds  \, \mathbb{Q}^\beta(dg)=\norm{\zeta}_{T}^2. 
\end{align*}
To prove strong convergence of $\div_{q_N}\zeta^N $  to $\div_\Qbeta{\zeta} $, by definition we have to show that there exists a sequence $(d^N\zeta)_N\subset H$ tending to $\div_\Qbeta{\zeta} $ in $H$ such that 
\begin{align*}
&\lim_{N} \limsup_{M} \norm{ \Phi^M (d^N\zeta )-\div_{q_M}\zeta^M}^2_{H^M}=0. 
\end{align*}
The choice
\begin{align*}
d^N\zeta(g):=\div_{q_N}\zeta^N(g(t_1),\ldots,g(t_{N-1}))
\end{align*}
makes this convergence trivial, once we have proven that in fact $(d^N\zeta)_N$ converges to $\div_\Qbeta{\zeta} $ in $H$.
This is carried out in the following two lemmas. \bbox

\begin{lemma}
For $\Qbeta$-a.s.\ $g$ we have
\begin{align*}
V_{N,\varphi}^\beta(g(t_1),\ldots, g(t_{N-1})) \rightarrow V_\varphi^\beta(g), \quad \text{as } N \rightarrow \infty,
\end{align*}
and we have also convergence in $L^p(\G,\Qbeta)$, $p>1$.
\end{lemma}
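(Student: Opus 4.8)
The plan is to prove the $\Qbeta$-almost sure convergence first and then upgrade it to $L^p$-convergence by a uniform integrability argument. Recall that
\[
V_{N,\varphi}^\beta(g(t_1),\ldots,g(t_{N-1})) = \Bigl(\tfrac{\beta}{N}-1\Bigr)\sum_{i=0}^{N-1}\frac{\varphi(g(t_{i+1}))-\varphi(g(t_i))}{g(t_{i+1})-g(t_i)} + \sum_{i=1}^{N-1}\varphi'(g(t_i)),
\]
with $t_i = i/N$, and the target is $V^\beta_\varphi(g) = V^0_\varphi(g) + \beta\int_0^1\varphi'(g(x))\,dx - \tfrac{\varphi'(0)+\varphi'(1)}2$. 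I would split the first sum as $\tfrac\beta N \sum_i \tfrac{\delta(\varphi\circ g)}{\delta g}$ over the partition minus $\sum_i \tfrac{\delta(\varphi\circ g)}{\delta g}$, and treat the three resulting pieces separately. For the piece $\tfrac\beta N\sum_i \tfrac{\varphi(g(t_{i+1}))-\varphi(g(t_i))}{g(t_{i+1})-g(t_i)}$, on intervals where $g$ varies slowly the difference quotient is close to $\varphi'(g(t_i))$, so this is a Riemann sum converging to $\beta\int_0^1\varphi'(g(x))\,dx$; the contribution of intervals straddling a jump is $O(1/N)$ times a bounded quantity (here $\|\varphi'\|_\infty$ bounds each difference quotient) and hence negligible. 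The term $\sum_{i=1}^{N-1}\varphi'(g(t_i))$ must be combined with $-\sum_{i=0}^{N-1}\tfrac{\delta(\varphi\circ g)}{\delta g}(t_i)$: on an interval $[t_i,t_{i+1})$ where $g$ is continuous and $C^1$-like, $\varphi'(g(t_i)) - \tfrac{\varphi(g(t_{i+1}))-\varphi(g(t_i))}{g(t_{i+1})-g(t_i)} \to 0$, so these cancel except near the endpoints $0,1$ (producing the $-\tfrac{\varphi'(0)+\varphi'(1)}2$ boundary term, using $\varphi(0)=\varphi(1)=0$ only insofar as needed — actually the boundary behaviour of $g$ under $\Qbeta$ is what matters) and near jump locations.

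The heart of the argument is the jump part. For $\Qbeta$-a.e.\ $g$ the jump set $J_g$ is countable with summable jump sizes, and the quantile function of a Dirichlet process has a well-understood structure. I would fix $\varepsilon>0$, choose a finite set $\{a_1,\ldots,a_m\}\subset J_g$ carrying all but $\varepsilon$ of the total variation $\sum_{a}(g(a+)-g(a-))$, and note that for $N$ large each $a_j$ lies in its own partition interval $[t_{i_j},t_{i_j+1})$. On that interval, $-\tfrac{\varphi(g(t_{i_j+1}))-\varphi(g(t_{i_j}))}{g(t_{i_j+1})-g(t_{i_j})} \to -\tfrac{\delta(\varphi\circ g)}{\delta g}(a_j)$ since $g(t_{i_j})\to g(a_j-)$ and $g(t_{i_j+1})\to g(a_j+)$ by right continuity and the fact that $a_j$ is isolated in the relevant sense; meanwhile the adjacent term $\varphi'(g(t_{i_j}))$ supplies $\tfrac12(\varphi'(g(a_j-))+\varphi'(g(a_j+)))$ after a symmetrization/averaging bookkeeping with the neighbouring interval. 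Summing over $j$ recovers the finite truncation of $V^0_\varphi(g)$, and the tail over the remaining small jumps is controlled uniformly in $N$ by $\|\varphi'\|_\infty\cdot(\text{residual variation}) \le \|\varphi'\|_\infty\,\varepsilon$, plus an analogous bound for the limit $V^0_\varphi(g)$. Letting $N\to\infty$ then $\varepsilon\to 0$ gives the pointwise claim.

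For the $L^p$ upgrade, the strategy is to show the sequence $\{V_{N,\varphi}^\beta(g(t_1),\ldots,g(t_{N-1}))\}_N$ is bounded in $L^q(\G,\Qbeta)$ for some $q>p$, which together with a.s.\ convergence gives $L^p$-convergence by Vitali. The crude bound $|V_{N,\varphi}^\beta| \le (1+\tfrac\beta N)\sum_{i}|\tfrac{\delta(\varphi\circ g)}{\delta g}(t_i)| + \|\varphi'\|_\infty(N-1)$ is useless; instead I would bound $|V_{N,\varphi}^\beta(g(t_1),\dots,g(t_{N-1}))| \le C\bigl(\|\varphi'\|_\infty + \|\varphi''\|_\infty + \sum_{i}\tfrac{\delta(\varphi\circ g)}{\delta g}(t_i)\cdot\mathbf 1_{\{\text{jump in }[t_i,t_{i+1})\}}\bigr)$ after the cancellations above are accounted for at the level of the estimate, reducing matters to an $L^q$-bound on the total-variation-type functional $g \mapsto \sum_{a\in J_g}\tfrac{\delta(\varphi\circ g)}{\delta g}(a) \le \|\varphi'\|_\infty \sum_{a\in J_g}(g(a+)-g(a-)) \le \|\varphi'\|_\infty$, which is in fact bounded. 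The moments of the discretized difference-quotient sums under $q_N$ can be computed or estimated from the explicit Dirichlet density for $q_N$, uniformly in $N$, using that the increments $g(t_{i+1})-g(t_i)$ are Dirichlet-distributed with parameters $\beta/N$ and that $\mathbb E_{q_N}[(x^{i+1}-x^i)^{-1}\cdot\text{bounded}]$ stays controlled after the telescoping. The main obstacle I anticipate is precisely this: organizing the cancellations between the discrete sums cleanly enough that only genuinely bounded (or $L^q$-bounded) quantities remain, since individually each of the three discrete sums blows up like $N$ and only their specific combination converges — so the pointwise convergence and the uniform integrability must both be extracted from the \emph{same} telescoping rearrangement rather than term by term.
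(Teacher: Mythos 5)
Your treatment of the combined sum $\sum_i\bigl[\varphi'(g(t_i))-\tfrac{\varphi(g(t_{i+1}))-\varphi(g(t_i))}{g(t_{i+1})-g(t_i)}\bigr]$ away from the selected large jumps is where the argument breaks down. Each such term is not merely ``close to zero'': by Taylor it equals $-\tfrac12\varphi''(\xi_i)\,(g(t_{i+1})-g(t_i))$ for some intermediate $\xi_i$, and although each term is $O(g(t_{i+1})-g(t_i))$, there are order $N$ of them and their sum is \emph{not} negligible — it converges to $-\tfrac12\int \varphi''(g(s))\,dg(s)$ over the stretches between the $l$ largest jumps, i.e.\ after a change of variables to $-\tfrac12\int_{g(a_j+)}^{g(a_{j+1}-)}\varphi''(s)\,ds$ up to errors controlled by the residual jump sizes. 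Summed over the stretches this produces $-\tfrac12(\varphi'(1)-\varphi'(0))+\tfrac12\sum_{a\in J_g}\bigl(\varphi'(g(a+))-\varphi'(g(a-))\bigr)$, which is precisely what converts the one-sided contribution $\varphi'(g(a-))-\tfrac{\delta(\varphi\circ g)}{\delta g}(a)$ coming from the interval containing a large jump into the symmetric average $\tfrac12\bigl(\varphi'(g(a+))+\varphi'(g(a-))\bigr)$ appearing in $V^0_\varphi$, and which supplies half of the boundary term $-\tfrac{\varphi'(0)+\varphi'(1)}{2}$. In your proposal this accumulated second-order correction is declared negligible (``these cancel except near the endpoints and jumps''), and the symmetric average is attributed to an unspecified ``symmetrization/averaging bookkeeping with the neighbouring interval''; but the neighbouring intervals contribute nothing of the sort, and no rearrangement of adjacent terms yields the missing $\tfrac12(\varphi'(g(a+))-\varphi'(g(a-)))$. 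Without this It\^o-type correction you land on a wrong limit. Note also that under $\Qbeta$ the path $g$ is a.s.\ a pure-jump function with jumps dense in $[0,1)$, so there are no intervals on which $g$ is ``continuous and $C^1$-like''; the correct bookkeeping (as in the paper) fixes the $l$ largest jumps, demands that the increments of $g$ off those intervals be at most $C/l$, and Taylor-expands there, letting first $N\to\infty$ and then $l\to\infty$.

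Concerning the $L^p$ statement, your Vitali/moment strategy is both incomplete and unnecessarily heavy. The proposed reduction of $|V^\beta_{N,\varphi}|$ to the bounded functional $\sum_{a\in J_g}\tfrac{\delta(\varphi\circ g)}{\delta g}(a)$ presupposes exactly the cancellation estimate that is missing above, and the moment computations under $q_N$ are not needed: after the rearrangement just described, every piece of $V^\beta_{N,\varphi}(g(t_1),\ldots,g(t_{N-1}))$ is bounded uniformly in $g$ and $N$ by a constant depending only on $\beta$, $\norm{\varphi'}_\infty$ and $\norm{\varphi''}_\infty$ (the difference-quotient average is bounded by $\norm{\varphi'}_\infty$, and the Taylor remainders are controlled by the telescoping bound $\sum_i (g(t_{i+1})-g(t_i))\le 1$); since $V^\beta_\varphi$ is likewise bounded, almost sure convergence plus bounded convergence immediately gives convergence in every $L^p$, which is how the paper argues.
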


\begin{proof}
We rewrite $V_{N,\varphi}^\beta(g(t_1),\ldots, g(t_{N-1}))$ as
\begin{align} \label{V_n_rewritten}
V_{N,\varphi}^\beta(g(t_1),\ldots, g(t_{N-1}))=&
\beta \sum_{i=0}^{N-1} \frac{\varphi(g(t_{i+1}))-\varphi(g(t_i))}{g(t_{i+1})-g(t_i)} (t_{i+1}-t_i)  \nonumber \\
&  -\frac{\varphi(g(t_{1}))-\varphi(g(t_0))}{g(t_{1})-g(t_0)}+\sum_{i=1}^{N-2} \left( \varphi'(g(t_i)) -\frac{\varphi(g(t_{i+1}))-\varphi(g(t_i))}{g(t_{i+1})-g(t_i)} \right)\\
& +\varphi'(g(t_{N-1})) -\frac{\varphi(g(t_{N}))-\varphi(g(t_{N-1}))}{g(t_{N})-g(t_{N-1})} \nonumber .
\end{align}
Note that all terms are uniformly bounded in $g$ with a bound depending on the supremum norm of $\varphi'$ and $\varphi''$, respectively. Since the same holds for $V^\beta_\varphi(g)$ (cf.\ Lemma 5.1 in [vRS07]), it is sufficient to show convergence $\Qbeta$-a.s. By the support properties of $\Qbeta$ $g$ is continuous at $t_N=1$, so that  the last line in \eqref{V_n_rewritten} tends to zero. Using Taylor's formula we obtain that  the first term in \eqref{V_n_rewritten} is equal to
\begin{align*}
\beta \sum_{i=0}^{N-1} \varphi'(g(t_i)) (t_{i+1}-t_i)
+\frac{1}{2} \sum_{i=0}^{N-1}   \varphi''(\gamma_i) \, (g(t_{i+1})-g(t_i)) \, (t_{i+1}-t_i),
\end{align*}
for some $\gamma_i \in [g(t_i),g(t_{i+1})]$. Obviously, the first term tends to $\beta \int_0^1 \varphi'(g(s))\, ds$ and the second one to zero as $N\rightarrow \infty$. Thus, it remains to show that the second line in \eqref{V_n_rewritten} converges to 
\begin{align} \label{target_limit}
\sum_{a\in J_g}\left[
\frac{\varphi'(g(a+))+\varphi'(g(a-))}2-\frac{\delta(\varphi\circ
g)}{\delta g}(a)\right]-\frac{\varphi'(0)+\varphi'(1)}{2}.
\end{align}
Note that by the right-continuity of $g$ the first term in the second line in \eqref{V_n_rewritten} tends to $-\varphi'(0)$.  Let now $a_2, \dots, a_{l-1}$ denote the $l-2$ largest jumps of
$g $ on $ ]0,1[$. For $N$ very large (compared with $l$) we may assume that $a_2, \dots, a_{l-2}  \in ]\frac 2 N , 1 -\frac 2 N [$. Put $a_1 := \frac 1 N $, $a_{l} := 1- \frac 1 N$. 
For $j=1, \dots, l$ let $k_j$ denote the index $i \in \{ 1, \dots,
N-1\}$, for which $a_j \in \left[t_i, t_{i+1} \right[$. In particular,   $k_1 =1 $ and $k_l=N-1$. Then
\begin{align} \label{sum_k_i}
\sum_{i\in \{k_2,\ldots,k_{l-1}\}} \varphi'(g(t_i)) -\frac{\varphi(g(t_{i+1}))-\varphi(g(t_i))}{g(t_{i+1})-g(t_i)}&
\xrightarrow[N\rightarrow \infty]{} \sum_{j=2}^{l-1} \varphi'(g(a_j-))-\frac{\delta(\varphi\circ
g)}{\delta g}(a_j) \nonumber \\
&\xrightarrow[l\rightarrow \infty]{} \sum_{a\in J_g} \varphi'(g(a-))-\frac{\delta(\varphi\circ
g)}{\delta g}(a).
\end{align}
Provided $l$ and $N$ are chosen so large that
\begin{align*}
|g(t_{i+1})-g(t_i)|\leq \frac {C} {l}
\end{align*}
for all $i\in \{0,\ldots, N-1\}\backslash \{k_1,\ldots,k_l\}$, where $C=\sup_s |\varphi'''(s)| /6$,
again by Taylor's formula we get for every $j\in\{1,\ldots,l-1\}$
\begin{align*}
&\sum_{i=k_j+1}^{k_{j+1}-1} \varphi'(g(t_i)) -\frac{\varphi(g(t_{i+1}))-\varphi(g(t_i))}{g(t_{i+1})-g(t_i)}\\
=&
-\sum_{i=k_j+1}^{k_{j+1}-1} \frac{1}{2} \varphi''(g(t_i))\, (g(t_{i+1})-g(t_i))+\frac{1}{6} \varphi'''(\gamma_i)\,(g(t_{i+1})-g(t_i))^2 \\
\xrightarrow[N\rightarrow \infty]{} & -\frac{1}{2} \int_{a_j+}^{a_{j+1}-} \varphi''(g(s)) \, dg(s) + O(l^{-2}) 
= -\frac{1}{2} \int_{g(a_j+)}^{g(a_{j+1}-)} \varphi''(s) \, ds + O(l^{-2}).
\end{align*}
Summation over $j$ leads to
\begin{align*}
&\sum_{j=1}^{l-1}\sum_{i=k_j+1}^{k_{j+1}-1} \varphi'(g(t_i)) -\frac{\varphi(g(t_{i+1}))-\varphi(g(t_i))}{g(t_{i+1})-g(t_i)} \\
\xrightarrow[N\rightarrow \infty]{}&
-\frac{1}{2}\sum_{j=1}^{l-1} \int_{g(a_j+)}^{g(a_{j+1}-)} \varphi''(s) \, ds + O(l^{-1})
=-\frac{1}{2}\int_0^1 \varphi''(s) \, ds+ \frac{1}{2}\sum_{j=2}^{l-1} \int_{g(a_j-)}^{g(a_{j}+)} \varphi''(s) \, ds + O(l^{-1}) \\
\xrightarrow[l\rightarrow \infty]{}&-\frac{1}{2} (\varphi'(1)-\varphi'(0))+\frac{1}{2} \sum_{a\in J_g} \varphi'(g(a+))-\varphi'(g(a-)).
\end{align*}
Combining this with \eqref{sum_k_i} yields that the second line of \eqref{V_n_rewritten} converges in fact to \eqref{target_limit}, which completes the proof. \bbox
\end{proof}
Since $w_N(g(t_1),\ldots, g(t_{N-1})$ converges to $w$ in $L^p(\G,\Qbeta)$, $p>0$ (cf.\ proof of corollary \ref{simplifycorr} above), the last lemma ensures that the first term of $d^N\zeta$ converges to the first term of $\div_\Qbeta{\zeta}$ in $H$, while the following lemma deals with the second term.

\begin{lemma}
For $\Qbeta$-a.s.\ $g$ we have
\begin{align*}
\langle \nabla w_N(g(t_1),\ldots, g(t_{N-1})),\vec{\varphi}(g(t_1),\ldots, g(t_{N-1})) \rangle_{\mathbb{R}^{N-1}} \rightarrow \langle \nabla w_{|g},\varphi(g(.))\rangle_{L^2(0,1)}   , \quad \text{as } N \rightarrow \infty,
\end{align*}
and we have also convergence in $H$.

\end{lemma}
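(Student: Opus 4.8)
The plan is to reduce the claimed convergence to an elementary Riemann-sum argument, exploiting the explicit form of the discrete gradient $\nabla w_N$ that was already identified in the proof of the (Mosco II') proposition. Recall from that proof, specifically from the displayed formula for $\bigl(\nabla \Phi^N(l_f)(X)\bigr)^i = \tfrac1N(\eta^N * f)(\tfrac iN)$ and the product/chain rule, that for $w_N = \prod_{i=1}^n(\Phi^N(l_{f_i}))^{k_i}$ one has a closed expression for each component $(\nabla w_N)^j$ as a sum over $i$ of $k_i\,(\Phi^N(l_{f_i}))^{k_i-1}\prod_{m\neq i}(\Phi^N(l_{f_m}))^{k_m}$ times $\tfrac1N(\eta^N*f_i)(\tfrac jN)$. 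Hence
\[
\langle \nabla w_N(g(t_1),\ldots,g(t_{N-1})),\vec\varphi(g(t_1),\ldots,g(t_{N-1}))\rangle_{\R^{N-1}}
= \sum_{i=1}^n k_i\,A_i^N(g)\cdot \frac1N\sum_{j=1}^{N-1}(\eta^N*f_i)\Bigl(\tfrac jN\Bigr)\,\varphi(g(\tfrac jN)),
\]
where $A_i^N(g) := (\Phi^N(l_{f_i})(g))^{k_i-1}\prod_{m\neq i}(\Phi^N(l_{f_m})(g))^{k_m}$.

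First I would treat the scalar prefactors $A_i^N(g)$: since each $\Phi^N(l_{f_m})(g) = \langle f_m, g_{g}^N\rangle_{L^2}$ with $g^N$ the piecewise-linear interpolation of Lemma \ref{condexp}, and $g^N \to g$ in $L^2(0,1)$ for $\Qbeta$-a.e.\ $g$ (by right-continuity of $g$ and dominated convergence), each factor converges pointwise a.s.\ and is uniformly bounded; therefore $A_i^N(g) \to (l_{f_i}(g))^{k_i-1}\prod_{m\neq i}(l_{f_m}(g))^{k_m} =: A_i(g)$ for $\Qbeta$-a.e.\ $g$, with a uniform bound. Second, for the inner sum I would observe that $\tfrac1N\sum_{j=1}^{N-1}(\eta^N*f_i)(\tfrac jN)\,\varphi(g(\tfrac jN))$ is a Riemann sum: $\eta^N*f_i \to f_i$ in, say, $L^2(0,1)$ (standard approximate-identity estimate, uniformly in $N$ in $L^2$-norm since $\|\eta^N*f_i\|_{L^2}\le\|f_i\|_{L^2}$), and the function $s\mapsto (\eta^N*f_i)(s)\varphi(g(s))$ has uniformly bounded $L^2$-norm and converges a.e.\ (along a subsequence, or everywhere in $L^2$) to $f_i(s)\varphi(g(s))$; comparing the Riemann sum over the grid $\{j/N\}$ with the integral and using that $g$ is $\Qbeta$-a.s.\ continuous at each fixed point and has only countably many jumps, this sum tends to $\int_0^1 f_i(s)\varphi(g(s))\,ds$. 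But wait — one must be slightly careful that $\varphi\circ g$ is only a bounded function of bounded variation, not continuous, so the Riemann-sum convergence should be phrased via dominated convergence on $(0,1)$ against $ds$ rather than via uniform continuity; this is routine since $g$ has at most countably many discontinuities, hence is $dx$-a.e.\ continuous. Combining, the left-hand side converges $\Qbeta$-a.s.\ to $\sum_i k_i A_i(g)\int_0^1 f_i(s)\varphi(g(s))\,ds$, which is exactly $\langle \nabla w_{|g},\varphi(g(\cdot))\rangle_{L^2(0,1)}$ by the chain rule for $\nabla$ on $\G$.

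Finally, for the upgrade from a.s.\ convergence to convergence in $H = L^2(\G,\Qbeta)$: every term in the displayed sum is bounded uniformly in $g$ and $N$ — the prefactors $A_i^N$ are bounded because the $f_m$ and $g$ are bounded, and $\bigl|\tfrac1N\sum_j(\eta^N*f_i)(\tfrac jN)\varphi(g(\tfrac jN))\bigr|\le \|f_i\|_\infty\|\varphi\|_\infty$ — so the whole left-hand side is a uniformly bounded sequence converging $\Qbeta$-a.s., and Lebesgue's dominated convergence theorem gives convergence in every $L^p(\G,\Qbeta)$, in particular in $H$. Together with the preceding lemma (convergence of the $V^\beta_{N,\varphi}$ term) and the $L^p$-convergence $w_N(g(t_1),\ldots)\to w$ already recorded, this shows $d^N\zeta \to \div_\Qbeta\zeta$ in $H$, completing the proof of Lemma \ref{divapprox}. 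I expect the main obstacle to be the bookkeeping in the Riemann-sum step: one needs to interchange the two limits correctly (first $N\to\infty$ with the convolution kernel $\eta^N*f_i$, which itself depends on $N$, against the grid sum, also depending on $N$) and to argue cleanly that the discrepancy between $\sum_j(\eta^N*f_i)(\tfrac jN)\varphi(g(\tfrac jN))\tfrac1N$ and $\int_0^1 f_i\varphi(g)\,ds$ vanishes despite $\varphi\circ g$ being merely BV; splitting $\eta^N*f_i - f_i$ off first in $L^2$ and handling the remaining genuine Riemann sum $\tfrac1N\sum_j f_i(\tfrac jN)\varphi(g(\tfrac jN))$ by dominated convergence on the a.e.-continuity set of $g$ is the cleanest route.
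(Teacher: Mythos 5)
Your argument is correct, but it is organized quite differently from the paper's. The paper disposes of this lemma in a few lines: it first reduces to $\Qbeta$-a.s.\ convergence by uniform boundedness (as you do at the end), then rewrites $\langle\nabla w_N(\vec g),\vec\varphi(\vec g)\rangle_{\R^{N-1}} = N\,\langle\iota^N(\nabla w_N(\vec g)),\iota^N(\vec\varphi(\vec g))\rangle_{L^2(0,1)}$ and splits by the triangle and Cauchy--Schwarz inequalities into $\norm{N\iota^N(\nabla w_N(\vec g))-\nabla w_{|g}}_{L^2(0,1)}\norm{\iota^N(\vec\varphi(\vec g))}_{L^2(0,1)}$ plus $\norm{\nabla w_{|g}}_{L^2(0,1)}\norm{\iota^N(\vec\varphi(\vec g))-\varphi(g(\cdot))}_{L^2(0,1)}$; the first term vanishes by remark \ref{rem_Mosco2} and the second by the definition of $\iota^N$. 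What you do instead is essentially unfold the content of that remark inside the proof: you expand $\nabla w_N$ explicitly via the product rule and the formula $(\nabla\Phi^N(l_f))^j=\frac1N(\eta^N*f)(j/N)$ from the Mosco II' proof and lemma \ref{condexp}, prove a.s.\ convergence of the scalar prefactors $A_i^N$, and treat the grid sum as a Riemann sum; this is more self-contained (the paper states remark \ref{rem_Mosco2} without proof, and your computation is in effect its justification) at the cost of more bookkeeping, whereas the paper's route buys brevity by quoting the remark and letting Cauchy--Schwarz separate the two convergences cleanly. One small repair to your Riemann-sum step: controlling $\frac1N\sum_j\bigl((\eta^N*f_i)(j/N)-f_i(j/N)\bigr)\varphi(g(j/N))$ through an $L^2$-bound on $\eta^N*f_i-f_i$ does not work, since point evaluations on the grid are not dominated by the $L^2$-norm; but because $f_i\in C([0,1])$ you have uniform convergence $\norm{\eta^N*f_i-f_i}_\infty\to 0$, after which the remaining genuine Riemann sum $\frac1N\sum_j f_i(j/N)\varphi(g(j/N))$ converges to $\int_0^1 f_i(s)\varphi(g(s))\,ds$ exactly as you indicate, since $\varphi\circ g$ is bounded and $dx$-a.e.\ continuous. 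Your passage from a.s.\ convergence to convergence in $H$ by uniform boundedness and dominated convergence is the same as the paper's.
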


\begin{proof}
As in the proof of the last lemma it is enough to prove convergence $\Qbeta$-a.s. Note that
\begin{align*}
\langle \nabla  w_N(\vec g ),\vec{\varphi}(\vec g ) \rangle_{\mathbb{R}^{N-1}} =N \langle \iota^N(\nabla  w_N(\vec g )),\iota^N(\vec{\varphi}(\vec g ))\rangle_{L^2(0,1)} ,  
\end{align*}
writing $\vec g :=(g(t_1),\ldots, g(t_{N-1}))$ and using the extension of $\iota^N$ on $\mathbb{R}^{N-1}$.
By triangle and Cauchy-Schwarz inequality we obtain
\begin{align*}
&|\langle N \iota^N(\nabla  w_N(\vec g )),\iota^N(\vec{\varphi}(\vec g ))\rangle_{L^2(0,1)}-
\langle \nabla w_{|g},\varphi(g(.))\rangle_{L^2(0,1)}| \\
\leq &|\langle N \iota^N(\nabla  w_N(\vec g ))-\nabla w_{|g},\iota^N(\vec{\varphi}(\vec g ))\rangle_{L^2(0,1)}|
+\langle \nabla w_{|g},\iota^N(\vec{\varphi}(\vec g ))-\varphi(g(.))\rangle_{L^2(0,1)}|  \\
\leq & \norm{N \iota^N(\nabla  w_N(\vec g ))-\nabla w_{|g}}_{L^2(0,1)} \, \norm{\iota^N(\vec{\varphi}(\vec g ))}_{L^2(0,1)}
+\norm{\nabla w_{|g}}_{L^2(0,1)} \, \norm{\iota^N(\vec{\varphi}(\vec g ))-\varphi(g(.))}_{L^2(0,1)},
\end{align*}
which tends to zero by remark \ref{rem_Mosco2} and by the definition of $\iota^N$. \bbox
\end{proof}

\subsubsection{Proof of proposition \ref{identlimit}}
\begin{lemma} \label{iotacont}
\textit{For $u \in C(\G)$ let $u_N  \in H^N$ be defined by $u_N (x):= u(\iota x)$, then $u_N \to u$ strongly. Moreover, for any sequence $f_N \in H^N$ with $f_N \to f \in H$ strongly, $u_N \cdot f_N \to u\cdot f$ strongly.}
\end{lemma}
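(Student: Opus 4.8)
The plan is to realise both strong convergences via explicit approximating sequences in $H$, the only genuine ingredient being a uniform step--approximation estimate on $\G$. Write $\pi_N\colon\G\to\Sigma_N$ for the projection $\pi_N g=(g(1/N),\dots,g((N-1)/N))$, so that $q_N$ is the law of $\pi_N$ under $\Qbeta$ and $u_N=u\circ\iota^N$, and set $j_N:=\iota^N\circ\pi_N\colon\G\to\G$ (which only depends on $\pi_N g$, hence $u\circ j_N$ is $\mathcal F^N$-measurable). First I would record that for every $g\in\G$
\[
\norm{j_N g-g}_{L^2(\eI)}^2\le \frac{2}{N},
\]
since $j_Ng$ equals the constant $g(i/N)$ on $[i/N,(i+1)/N)$ for $i\ge1$ (and $0$ on $[0,1/N)$), so monotonicity of $g$ bounds $|j_Ng-g|$ on each such interval by $g((i+1)/N)-g(i/N)$, and these increments sum to at most $1$. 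Since $(\G,d_{L^2})$ is compact, $u$ is uniformly continuous on it, whence $\varepsilon_N:=\norm{u\circ j_N-u}_{L^\infty(\G)}\to0$ as $N\to\infty$.

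The argument then rests on the identity $u_N\cdot\Phi^N h=\Phi^N((u\circ j_N)\,h)$, valid for every $h\in H$: $u\circ j_N$ is $\mathcal F^N$-measurable with profile $x\mapsto u(\iota^N x)$ on $\Sigma_N$, so it may be pulled inside the conditional expectation, and the measure $q_N$ identifies $\mathcal F^N$-measurable elements of $H$ isometrically with elements of $H^N$ (in particular $\norm{\Phi^N w}_{H^N}\le\norm{w}_{L^2(\Qbeta)}$ and $\norm{u_N}_\infty\le\norm{u}_{L^\infty(\G)}$). Taking $h\equiv1$ gives $\Phi^N(u\circ j_N)=u_N$. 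For the first assertion I choose the constant sequence $\tilde u_N:=u\in H$; then by linearity of $\Phi^M$,
\[
\norm{\Phi^M u-u_M}_{H^M}=\norm{\Phi^M(u-u\circ j_M)}_{H^M}\le\norm{u-u\circ j_M}_{L^2(\Qbeta)}\le\varepsilon_M\to0 \quad(M\to\infty),
\]
independently of $N$, so $\lim_N\limsup_M\norm{\Phi^M\tilde u_N-u_M}_{H^M}=0$ while $\tilde u_N\to u$ in $H$; that is, $u_N\to u$ strongly.

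For the product statement let $(\tilde f_N)\subset H$ realise the strong convergence $f_N\to f$ and set $\tilde h_N:=u\,\tilde f_N\in H$; since $u$ is bounded, $\tilde h_N\to uf$ in $H$. Using the identity above with $h=\tilde f_N$ and the triangle inequality,
\begin{align*}
\norm{\Phi^M(u\,\tilde f_N)-u_M f_M}_{H^M}
&\le\norm{\Phi^M((u-u\circ j_M)\tilde f_N)}_{H^M}+\norm{u}_{L^\infty(\G)}\,\norm{\Phi^M\tilde f_N-f_M}_{H^M}\\
&\le\varepsilon_M\,\norm{\tilde f_N}_{L^2(\Qbeta)}+\norm{u}_{L^\infty(\G)}\,\norm{\Phi^M\tilde f_N-f_M}_{H^M}.
\end{align*}
Letting $M\to\infty$ kills the first summand ($\varepsilon_M\to0$, $N$ fixed), and then letting $N\to\infty$ kills $\limsup_M\norm{\Phi^M\tilde f_N-f_M}_{H^M}$ by the choice of $\tilde f_N$; hence $\lim_N\limsup_M\norm{\Phi^M\tilde h_N-u_Mf_M}_{H^M}=0$ and $u_Nf_N\to uf$ strongly.

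Everything here is routine; the only points needing a little care are the uniform bound $\norm{j_Ng-g}_{L^2(\eI)}\le\sqrt{2/N}$ on $\G$ (which avoids any appeal to martingale convergence of $\mathbb E(\,\cdot\,|\mathcal F^N)$) and the bookkeeping of the iterated limit $\lim_N\limsup_M$ built into the Kuwae and Shioya definition of strong convergence, together with the isometry between $\mathcal F^M$-measurable functions in $H$ and elements of $H^M$ that makes the identities $\Phi^M(u\circ j_M)=u_M$ and $u_M\Phi^M\tilde f_N=\Phi^M((u\circ j_M)\tilde f_N)$ exact rather than merely approximate. I anticipate no substantial obstacle.
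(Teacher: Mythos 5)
Your proof is correct and follows essentially the same route as the paper's: identify $\Phi^M$ with the conditional expectation $\mathbb E(\,\cdot\,|\mathcal F^M)$, use its contractivity and the fact that the bounded $\mathcal F^M$-measurable factor $u\circ j_M$ (the paper's $\tilde u_M$) can be pulled through it, and then control the iterated limit $\lim_N\limsup_M$. Your only departures are cosmetic but pleasant: the explicit uniform bound $\norm{j_Ng-g}_{L^2(\eI)}\le\sqrt{2/N}$ making $u\circ j_N\to u$ uniform, and the choice of the constant sequence $\tilde u_N=u$ (resp.\ $u\,\tilde f_N$) in place of the paper's $u(g^N)$ (resp.\ $\tilde u_N\tilde f_N$), which shortens the bookkeeping without changing the substance.
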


\begin{proof} Let $\tilde u_N \in H$ be defined by $\tilde u_N(g):= u(g^N)$, where $g^N:= \sum_{i=1}^N g(i/N)\one_{[i/N, (i+1)/N)}$, then $\tilde u_N \to u$ in $H$ strongly.  Moreover,
\[
\lim_N \lim_M \norm{ \Phi^M \tilde u_N - u_M}_{H^M} = \lim_N \lim_M \norm{ \Phi^M \tilde u_N - \tilde u_M}_{H}
 = \lim_N \norm{  \tilde u_N - u}_{H} = 0,
\]
where as above we have identified $\Phi^M$ with the corresponding projection operator in $L^2(\G,\Qbeta)$. For the proof of the second statement, let  $H \ni \tilde  f_N \to f$ in $H$  such that  $\lim_N \limsup_M  \norm {\Phi^M \tilde f_N -f_M}_{H^M} =0$. From the uniform boundedness of $\tilde u_N$ it follows that also $\tilde u_N \cdot \tilde f_N \to u \cdot f$ in $H$. In order to show  $H^M \ni u_M \cdot f_M \to u\cdot f$ write
\begin{align*}
  \norm {\Phi^M (\tilde u_N\cdot \tilde f_N) -u_M \cdot f_M}_{H^M} & \leq 
 \norm {\Phi^M (\tilde u_N\cdot \tilde f_N) -u_M \cdot \Phi^M (\tilde f_M)}_{H^M} +\norm {u_M \cdot f_M -u_M \cdot \Phi^M (\tilde f_M)}_{H^M}.
\end{align*}
Identifying the map $\Phi^M$ with the associated conditional expectation operator, considered as an orthogonal projection  in $H$, the claim follows from  
\begin{align*}
 \norm {\Phi^M (\tilde u_N\cdot \tilde f_N) -u_M \cdot \Phi^M (\tilde f_M)}_{H^M} &  = 
\norm { \Phi^M (\tilde u_N\cdot \tilde f_N)   -\tilde u_M \cdot  \Phi^M (\tilde f_M)}_{H} \\
& = 
\norm { \Phi^M (\tilde u_N\cdot \tilde f_N)   - \Phi^M ( \tilde u_M \cdot \tilde f_M)}_{H}  \\
& \leq 
\norm {  \tilde u_N\cdot \tilde f_N   -   \tilde u_M \cdot \tilde f_M}_{H}  \intertext{and} 
  \norm {u_M \cdot f_M -u_M \cdot \Phi^M (\tilde f_M)}_{H^M} &  \leq     \norm{u}_{\infty} \norm {  f_M - \Phi^M (\tilde f_N)}_{H^M} \nonumber \\
 & ~~~ +    \norm{u}_{\infty} \norm { \Phi^M (\tilde f_N)  - \Phi^M (\tilde f_M)}_{H^M} \nonumber
\\
&= \norm{u}_{\infty} \norm {  f_M - \Phi^M (\tilde f_N)}_{H^M} \nonumber \\
&~~~ +   \norm{u}_{\infty} \norm { \Phi^M (\tilde f_N)  - \Phi^M (\tilde f_M)}_{H} \nonumber
\\
& \leq \norm{u}_{\infty} \norm {  f_M - \Phi^M (\tilde f_N)}_{H^M} \nonumber \\
& ~~~ +   \norm{u}_{\infty} \norm {  \tilde f_N  -  \tilde f_M}_{H} \nonumber
\end{align*}
such that in fact $\lim_N \limsup_M \norm {\Phi^M (\tilde u_N\cdot \tilde f_N) -u_M \cdot f_M}_{H^M} =0$. \bbox 
\end{proof}

\textit{Proof of proposition \ref{identlimit}} It suffices to prove the claim for functions $f \in C(\G^l)$ of the form $f(g_1,\dots , g_l)= f_1(g_1)\cdot f_2(g_2) \dots \cdot f_l(g_l)$ with $f_i \in C(\G)$. Let  $P_t^N : H^N \to H^N$ be the semigroup on $H^N$ induced by $g^N$ via 
 $\mathbb E_{g\cdot q_N} [f(g_t^N)] = \langle P_t^N f, g\rangle_{H^N}$. From theorem \ref{mconvergence} and the abstract results in \cite{MR2015170} it follows that $P^N_t$ converges to $P_t$ strongly, i.e. for any sequence $u^N \in H^N$ converging to some $ u\in H$   strongly, the sequence $P^N_t u^N $ also strongly converges to $P_tu $. 
Let $f^N_i := f_i \circ \iota ^N$, then inductive application of  lemma \ref{iotacont}
yields 
\begin{align*}
P_{t_l-t_{l-1}}^N (f_l^N &\cdot P_{t_{l-1}-t_{l-2}}^N ( f^N_{l-1}\cdot  P_{t_{l-2}-t_{l-3}}^N\dots  f^N_2\cdot P_{t_1}^N f^N_1)\cdots) \\
&\stackrel{N \to \infty}{\longrightarrow}  P_{t_l-t_{l-1}}  (f_l  \cdot P_{t_{l-1}-t_{l-2}}  ( f _{l-1}\cdot  P_{t_{l-2}-t_{l-3}}\dots  f_2\cdot P_{t_1} f_1)\cdots) \mbox{  strongly},  
\end{align*} 
which in particular implies the convergence of inner products. Hence, using the Markov property of $g^N$ and $g$ we may conclude that
\begin{align}
 \lim_N  \mathbb E\bigl( f_1(g^N_{t_1}) & \dots f_l(g^N_{t_l})\bigr)  = \lim_N \mathbb E\bigl( f_1^N(X^N_{t_1}) \dots f_l^N(X^N_{t_l})\bigr) \nonumber \\
& = \lim_N \langle  1, P_{t_l-t_{l-1}}^N (f_l^N \cdot P_{t_{l-1}-t_{l-2}}^N ( f^N_{l-1}\cdot  P_{t_{l-2}-t_{l-3}}^N\dots  f^N_2\cdot P_{t_1}^N f^N_1)\cdots)\rangle_{H^N} \nonumber \\
& =  \langle  1, P_{t_l-t_{l-1}} (f_l \cdot P_{t_{l-1}-t_{l-2}} ( f_{l-1}\cdot  P_{t_{l-2}-t_{l-3}}\dots  f_2\cdot P_{t_1} f_1)\cdots)\rangle_{H}\nonumber  \\
& = \mathbb E\bigl( f_1(g_{t_1})  \dots f_l(g_{t_l})\bigr). \tag*{$\Box$}
\end{align}

\section{Appendix: On a connection to $\nabla \phi$-interface models}
We conclude with a remark on  a link  to stochastic interface models.  \nocite{MR2228384} Consider an interface on the one-dimensional lattice $\Gamma_N:=\{1,\ldots,N-1\}$. The location of the interface at time $t$ is represented by the height variables $\phi_t=\{\phi_t(x),\, x\in \Gamma_N\}\in \sqrt{N} \cdot  \Sigma_N$ with dynamics determined by the generator $\tilde{L}^N$ defined below and with the boundary conditions $\phi_t(0)=0$ and $\phi(N)=\sqrt{N}$ at $\partial \Gamma_N:=\{0,N\}$.
\[ \tilde{L}^N f(\phi):= (\frac \beta N -1 )   \sum_{x\in \Gamma_N}\left ( \frac 1 {\phi(x) - \phi(x-1)} - \frac 1 {\phi(x+1) - \phi(x)} \right) \frac {\partial }{\partial \phi(x)} f(\phi) +  \Delta f(\phi)\] for $\phi\in \mbox{Int}(\sqrt{N}\cdot\Sigma_N)$ and with $\phi(0):=0 $ and  $\phi(N):=\sqrt{N}$.  $\tilde{L}^N$ corresponds  to $L^N$ as an operator on $C(\sqrt{N} \cdot\Sigma_N)$ with domain
\[ \D(\tilde{L}^N) := \{ f\in C^2(\sqrt{N} \cdot \Sigma_N)\,|\, \tilde{L}^N f \in C(\sqrt{N} \cdot\Sigma_N)\}. \] 
Note that this  system involves a non-convex interaction potential function $V$ on $(0,\infty)$ given by $V(r)=(1- \frac \beta N) \log(r)$ and the Hamiltonian
\[
H_N(\phi):=\sum_{x=0}^{N-1} V(\phi(x+1)-\phi(x)), \qquad \phi(0):=0, \, \phi(N):=\sqrt{N}.
\]
Then, the natural stationary distribution of the interface is the Gibbs measure $\mu_N$ conditioned on $\sqrt{N} \cdot \Sigma_N$:
\[
\mu_N(d\phi):=\frac 1 {Z_N} \exp(-H_N(\phi)) \one_{\{(\phi(1),\ldots, \phi(N-1))\in\sqrt{N}\cdot\Sigma_N\}} \prod_{x\in \Gamma_N} d\phi(x)
,\]
where $Z_N$ is a normalization constant. Note that $\mu_N$ is the corresponding measure of $q_N$ on the state space $\sqrt{N}\cdot \Sigma_N$.
Suppose now that $(\phi_t)_{t\geq 0}$ is the stationary process generated by $\tilde{L}^N$. Then the space-time scaled process
\[
\tilde{\Phi}^N_t (x):=\frac{1}{\sqrt{N}} \phi_{N^2 t}(x), \qquad x=0,\ldots,N,
\] 
living on $\Sigma_N$ is associated with the Dirichlet form $N\cdot \E^N$. Introducing the $\G$-valued fluctuation field
\[
\Phi^N_t (\vartheta):=\sum_{x\in \Gamma_N} \tilde{\Phi}^N_t (x) \, \one_{[x/N,(x+1)/N)} (\vartheta),  \qquad \vartheta \in [0,1),
\]
 by our main result we have weak convergence for the law of the equilibrium fluctuation field $\Phi^N$ to the law of the nonlinear diffusion on $\G$, which is the $\G$-parametrization of the Wasserstein diffusion.\\

{\bf Acknowledgements:} We thank Michael R\"ockner for explaining the importance of Markov uniqueness to us during the 2007 German-Japanese conference on Stochastic Analysis in Berlin. Many thanks go also to Theresa Heeg for providing us with the results of her simulation studies.


\bibliographystyle{abbrv}
\def\cprime{$'$}

\end{document}